\numberwithin{equation}{section}
\newcommand{\la}{\lambda}
\newcommand{\rz}{\mathbb{R}}
\title{Quadratic Interpolation and Rayleigh-Ritz Methods for Bifurcation Coefficients\thanks{
Received by the editors February 14, 2009; accepted for publication (in revised form) XXXX XX, XXXX; published electronically XXXX XX, XXXX.
}}
\author{W. M. Greenlee
\thanks{Department of Mathematics,
University of Arizona,
617 Santa Rita, Tucson, AZ 85721 USA
({\tt mgrnle@math.arizona.edu}).}
\and L. Hermi
\thanks{Department of Mathematics,
University of Arizona,
617 Santa Rita, Tucson, AZ 85721 USA
({\tt hermi@math.arizona.edu}).}
}
\begin{document}

\maketitle


\medskip

{\bf Abstract.} In this article we study the estimation of bifurcation coefficients in nonlinear branching problems by means of Rayleigh-Ritz approximation to the eigenvectors of the corresponding linearized problem. It is essential that the approximations converge in a norm of sufficient strength to render the nonlinearities continuous. Quadratic interpolation between Hilbert spaces is used to seek sharp rate of convergence results for bifurcation coefficients. Examples from ordinary and partial differential problems are presented.

\medskip

\begin{AMS}
Primary 34K10;
Secondary 35P15, 65M70.
\end{AMS}

\medskip

\begin{keywords}
Fractional Rayleigh-Ritz, Nonlinear Rotating String, Bifurcation, Harmonic Ritz, Quadratic Interpolation, Eigenvalue Asymptotics, Eigenfunction Approximation, Convergence Rates
\end{keywords}


\newpage

\thispagestyle{plain}
\markboth{W. M. Greenlee AND L. Hermi}{Fractional and Harmonic Rayleigh-Ritz}

\section{Introduction} \label{intro}

The most common method for estimation of the lower eigenvalues of a differential operator is the Rayleigh-Ritz method, or a variant such as finite elements in engineering problems, or Hartree-Fock in quantum mechanical problems. The emphasis in many numerical studies is on obtaining accurate eigenvalue approximations in an efficient and cost effective manner. Herein, we explore a different question, namely, is the calculation useful if needed to estimate a bifurcation coefficient in a nonlinear branching problem? If the eigenfunctions of the corresponding linearized problem must be approximated, the strength of the nonlinearity must be considered. In particular, the eigenfunctions of the linearized problem need to be approximated in a topology strong enough that nonlinear quantities to be calculated behave continuously. This may or may not be the case with the standard Rayleigh-Ritz method. We know of one prior paper on this question \cite{Bazley-Zwahlen}, in which the nonlinearity is taken to be continuous in the underlying Hilbert space topology.

Our approach is to introduce a whole scale of Rayleigh quotients using real powers of the selfadjoint operator which determines the corresponding linearized problem. This is a version of quadratic interpolation, also called Banach space interpolation between Hilbert spaces. A discrete set of such Rayleigh quotients was called ``Schwartz quotients'' in \cite{Collatz}, and used to generate a theoretical algorithm for eigenvalue estimation that is closely related to the power method for matrices.  We use our somewhat esoteric construction to seek sharp convergence rate estimates for bifurcation coefficients. This proceeds by use of eigenfunction estimates in Hilbert space topologies strong enough for the nonlinear approximation procedure to converge.

In the next section we describe the basic Rayleigh-Ritz method, and formulate a model problem concerning a nonlinear rotating string with variable density. In Section 3 the scale of Rayleigh quotients is presented, and relevant facts about quadratic interpolation between closed subspaces of Sobolev spaces determined by homogeneous boundary conditions are stated. The basic eigenvalue-eigenvector convergence theorem is also presented here, with the proof attached as the appendix. Then in Section 4 we prove theorems on spectral implementation of the basic convergence theorem. This method is known as the spectral Galerkin method in the numerical analysis literature (cf. \cite{Bernardi-Maday}, \cite{Boyd}, \cite{Mercier}). Most of this section can be implemented via finite elements, which we comment on in Section 6. In Section 5 we analyze the nonlinear rotating string problem, and present a few numerical results for this problem with a particular density function. Finally in Section 6 we illustrate our theory in the context of bifurcation problems for some partial differential equations, and close with a few remarks.

\section{The Rayleigh-Ritz Method and a Model Nonlinear Problem} \label{rrm}

Let $\mathcal{H}$ be a separable complex Hilbert space with norm
$\| \cdot \|$ and inner product $\langle \cdot, \cdot \rangle$.
Let $A$ be a selfadjoint operator in  $\mathcal{H}$ with domain
$\mathcal{D}(A)$. We assume that $A$ is positive definite, i.e.,
$\|A u \| \ge \alpha \|u\|$, $u \in \mathcal{D}(A)$, $\alpha >0$,
and that the lower portion of the spectrum of $A$ consists of
isolated eigenvalues of finite multiplicity. Thus the lower
spectrum of $A$ may be written
\[0<\alpha \le \la_1 \le \la_2 \le \ldots \le \la_{\infty}\]
with corresponding orthonormal eigenvectors $u_1, u_2, \ldots,$
where $\la_{\infty}$ denotes the least point of the essential
spectrum of $A$. By convention, $\la_{\infty} = \infty$ if $A$ has
compact inverse. Further, let $a(u)$ be the quadratic form
corresponding to $A$, i.e., the closure of $\langle A u , u
\rangle$, with domain $\mathcal{D}(a)$. Then $a(u) \ge \alpha \|u
\|^2$ for $u \in \mathcal{D}(a)$, and we let $a(u,v)$, $u, v \in
\mathcal{D}(a)$, be the corresponding Hermitian symmetric bilinear
form.

The Rayleigh-Ritz method for estimation of eigenpairs of $A$ begins with a family of trial vectors $\{w_i\}_{i=1}^{n} \subset \mathcal{D}(a)$. It then proceeds with the setup and resolution of the generalized matrix eigenvalue problem
\begin{equation} \notag
\big[a(w_i, w_j) \big] \, {\bf x} =\Lambda \big[\langle w_i, w_j \rangle\big] \, {\bf x}.
\end{equation}
If the resulting matrix eigenvalues are denoted by $\Lambda_1^{(n)} \le \Lambda_2^{(n)} \le \ldots \le \Lambda_n^{(n)}$, the first monotonicity principle \cite{Weinberger-SIAM} guarantees that for each $i=1, 2, \ldots, n$, $\lambda_i \le \Lambda_i^{(n)}$, thus providing upper bounds for the lowest $n$ eigenvalues of $A$. Let $u_1^{(n)}, u_2^{(n)}, \ldots, u_n^{(n)}$ be the corresponding orthogonalized eigenvectors, normalized in $\mathcal{H}$. Thus $\langle u_i^{(n)}, u_j^{(n)}\rangle= \delta_{ij}$ and
$a(u_i^{(n)}, u_j^{(n)}) = \Lambda_i^{(n)} \, \delta_{ij}$, where $\delta_{ij}$ is the Kronecker symbol.

Completeness of $\{w_i\}_{i=1}^{\infty}$ in $\mathcal{D}(a)$ is sufficient to guarantee that for each eigenvalue $\lambda_i$ of $A$ below $\lambda_{\infty}$, $\Lambda_i^{(n)} \to \lambda_i$ and $u_i^{(n)} \to u_i$ in $\mathcal{D}(a)$ both as $n\to \infty$. (cf. \cite{Babuska-Osborn} \cite{Chatelin} \cite{Weinberger-SIAM}). Rate of convergence estimates are also available from these sources. But to use the Rayleigh-Ritz eigenvectors for other calculations of interest requires estimates in other norms, which is the thrust of this paper. Herein we look specifically at the estimation of bifurcation coefficients in nonlinear eigenvalue problems. Convergence in the energy norm, i.e., that of $\mathcal{D}(a)$, may be insufficient to handle the nonlinearity, or when sufficient may give only a crude estimate. To illustrate this point we now begin the examination of a nonlinear rotating string problem.

Perhaps the simplest nonlinear refinement of the standard linear model for a tightly stretched flexible string of length $L$ with linear density $\rho$, rotating with uniform angular velocity $\omega$ about its equilibrium position along the $x-$axis is
\begin{equation} \label{5.1}
T_0 \, \frac{d^2 y}{dx^2} + \rho \, \omega^2 \, y \, \sqrt{1+\left( \frac{dy}{dx}\right)^2}=0.
\end{equation}
Herein, $y$ is the deflection from equilibrium at the point $x$, and the condition of force equilibrium in the $x$ direction (in the absence of external forces in that direction) implies that the tension $T$ satisfies
\[T \, \frac{dx}{ds}= T_0=\text{ constant},\]
so that by solving for the deflection $y$, the tension $T$ is given by
\[T=T_0 \, \sqrt{1+\left( \frac{dy}{dx}\right)^2}\]
(cf. \cite{Hilde}). As usual, $s$ denotes arclength. When $\rho$ is constant, \eqref{5.1} is solvable explicitly in terms of elliptic integrals \cite{Greenhill}. In this section we examine calculation of small, but not infinitesimal, deflections when $\rho$ is variable. Setting $\lambda=\omega^2/T_0$ and $y=\sqrt{\epsilon} \, u$, with $\epsilon$ small and positive, yields the nonlinear eigenvalue problem
\begin{equation}
-u'' = \lambda \, \rho(x) u \, \sqrt{1+ \epsilon \, u'^2}=0, \quad ':=\frac{d}{dx}\notag
\end{equation}
and we will require fixed end conditions
\[u(a)=u(b)=0,\]
signifying that the equilibrium position of the string is the interval $[a,b]$ of length $L$. We assume that $\rho$ is smooth and strictly positive on $[a,b]$.

To better illustrate applicability of the theory we will in fact consider the family of nonlinear eigenvalue problems
\begin{equation} \label{5.2}
-u'' = \lambda \, \rho(x) u \, \left(1+ \epsilon \, \left(u'\right)^{2 t}\right)^{\frac{1}{2t}}=0, \quad
u(a)=u(b)=0,
\end{equation}
where $t$ is a positive integer. These are all $W^{1,1}(a,b)$ perturbations of the linearized problem obtained by setting $\epsilon=0$, and the rotating string model is given when $t=1$.

Solutions of \eqref{5.2} are smooth and the eigenvalues of the linearized problem are simple, so known results of bifurcation theory employing the Banach space $C^2[a,b]$ imply that there are solution pairs $(\lambda_{\epsilon}, u_{\epsilon})$ analytic in $\epsilon$ for small $\epsilon$
emanating from any eigenpair $(\lambda_0, u_0)$ of
\begin{equation} \label{5.3}
-u'' = \lambda \, \rho(x) u, \quad
u(a)=u(b)=0,
\end{equation}
(cf. \cite{Sattinger}, \cite{Vainberg-Trenogin}). So we write solutions of \eqref{5.2} as
\begin{align} \label{5.4}
u_{\epsilon}&= v_0+ \epsilon \, v_1 + \frac{\epsilon^2}{2} \, v_2+\ldots \notag \\
& \\
\lambda_{\epsilon}&= \nu_0+ \epsilon \, \nu_1 + \frac{\epsilon^2}{2} \, \nu_2+\ldots, \notag
\end{align}
where $v_0=u|_{\epsilon=0}$, $v_1=\frac{d u}{d \epsilon}|_{\epsilon=0}$, etc. As we shall see, calculation of the coefficients $v_1, \nu_1$, etc. in \eqref{5.4} based on Rayleigh-Ritz type solutions of \eqref{5.3} will require Ritz methods convergent in norms stronger than that of the energy norm, i.e. the norm of $H_0^1(a,b)$. That this is the case is readily seen by expanding the nonlinearity in \eqref{5.2} to obtain
\[-u''= \lambda \, \rho \, u \left(1+ \frac{\epsilon}{2 t} \, \left(u'\right)^{2 t} + \frac{(1-2t)}{8 t^2} \epsilon^2 \, \left(u'\right)^{4 t}+\ldots \right).\]
Successive equations for $\nu_0, v_0, \nu_1, v_1, \ldots$ are obtained, as usual, by equating coefficients in $\epsilon$ in this expression.
The first equation obtained is the linear eigenvalue problem
\begin{equation} \label{5.5}
-v_0''=\nu_0 \, \rho(x) \, v_0, \quad v_0(a)=v_0(b)=0.
\end{equation}
Our goal is to investigate the use of Rayleigh-Ritz methods for \eqref{5.5} in the calculation of the bifurcation coefficient $\nu_1$, and further terms in the two series
\eqref{5.4}. The second equation is
\begin{equation} \label{5.6}
-v_1''-\nu_0\, \rho(x) v_1=\nu_1 \, \rho(x) \, v_0 +\frac{\nu_0}{2 t} \, \rho(x) \, v_0 \, \left(v_0'\right)^{2 t}, \quad v_1(a)=v_1(b)=0.
\end{equation}
The Fredholm alternative yields solvability of \eqref{5.6} if and only if
\[\nu_1 \, \left(\rho(x) v_0, v_0\right) + \frac{\nu_0}{2 t} \left(\rho(x) v_0 (v_0')^{2t}, v_0\right)=0,\]
where $\left(\cdot\, , \cdot\right)$ is the usual $L^2(a,b)$ inner product with respect to Lebesgue measure $dx$. Taking $v_0$ to be normalized in $L^2$ with respect to the measure $\rho(x) dx$ over $(a,b)$ (which we will take as the space $\mathcal{H}$ with inner product $\langle \cdot, \cdot \rangle$), i.e.,
\begin{equation} \label{5.7}
\left(\rho(x) v_0, v_0\right) = \langle v_0, v_0\rangle=1,
\end{equation}
gives $\nu_1$ as
\begin{equation} \label{5.8}
\nu_1= -\frac{\nu_0}{2 t} \, \left(\rho(x) v_0 (v_0')^{2t}, v_0\right)=  -\frac{\nu_0}{2 t} \, \langle
v_0 (v_0')^{2t}, v_0 \rangle.
\end{equation}
The component of $v_1$ in the ``$\rho$ orthogonal complement of $v_0$'' is uniquely determined by solving \eqref{5.6} with this value of $\nu_1$. Note that $\nu_1$ is negative, i.e., the bifurcation is subcritical. By also setting the component of $v_1$ parallel to $v_0$ in $\mathcal{H}$ equal to zero, one obtains that the parameter $\epsilon^{\frac{1}{2t}}$ is the $\mathcal{H}$ norm of the component of the power series solution of \eqref{5.1} in the direction of $v_0$.

The third equation obtained from \eqref{5.4} is
\begin{align} \label{5.9}
-v_2'' - \nu_0 \, \rho(x) v_2 &= \nu_2 \, \rho(x) v_0 + 2 \nu_1 \rho(x) v_1 + \frac{\nu_1}{t} \, \rho(x) \, v_0 \left(v_0'\right)^{2t} \notag \\
&+ \frac{\nu_0}{t} \,  \rho(x) \, v_1 \, \left(v_0'\right)^{2t}+ 2 \nu_0 \, \rho(x) \, v_0 \, v_1' \left(v_0'\right)^{2t-1}\notag \\
&+\frac{\nu_0 (1-2t)}{4 t^2} \, \rho(x) \, v_0 \, \left(v_0'\right)^{4t},
\end{align}
with $v_2 (a) =v_2(b)=0$. Retaining the normalization \eqref{5.7}, the Fredholm
alternative yields solvability of \eqref{5.9} if and only if
\begin{align} \label{5.10}
\nu_2 &= - 2 \nu_1 \langle v_1, v_0\rangle - \frac{\nu_1}{t} \, \langle v_0 \left(v_0'\right)^{2t}, v_0\rangle \notag \\
&- \frac{\nu_0}{t} \,  \langle v_1 \, \left(v_0'\right)^{2t}, v_0\rangle
- 2 \nu_0 \, \langle v_0 \, v_1' \left(v_0'\right)^{2t-1}, v_0\rangle \notag \\
&-\frac{\nu_0 (1-2t)}{4 t^2} \, \langle v_0 \, \left(v_0'\right)^{4t}, v_0\rangle.
\end{align}
With this value of $\nu_2$, $v_2$ is uniquely determined in the $\mathcal{H}$ orthogonal complement of $v_0$, while in the direction of $v_0$ the component of $v_2$ is taken to be zero. This preserves the previous interpretation of the parameter $\epsilon^{\frac{1}{2t}}$.

Note that if \eqref{5.5} is solved approximately by the Rayleigh-Ritz method, convergence of the approximate eigenvectors is in the topology of $H_0^1(a,b)$. This is sufficient to guarantee convergence of the corresponding approximation to $\nu_1$ as calculated from \eqref{5.8} only for $t=1$. To rigorously approximate $\nu_1$ for $t>1$, or $\nu_2$ for any value of $t$, requires that approximate eigenvectors converge in $C_0^1[a,b]$, which is strictly contained in $H_0^1(a,b)$. We will obtain such convergence estimates by using quadratic interpolation between Hilbert spaces to generalize the venerable notion of Rayleigh quotient. This will yield convergence rates in $H^{\alpha}(a,b)$ with $\alpha >3/2$, which implies convergence in $C_0^1[a,b]$ via basic Sobolev theory. Then rates of convergence of these nonlinear functionals will be estimated.

\section{Quadratic Interpolation and Fractional Rayleigh-Ritz} \label{quadratic-interpolation}

To provide a theoretical framework for the eigenvector estimates required in the preceding example, let $T=A^{-1}$, which is a bounded selfadjoint operator in $\mathcal{H}$. It is well known that $T$ is also selfadjoint in $\mathcal{D}(a)$ with the topology induced by the inner product $a(u,v)$ i.e., the restriction of $T$ to $\mathcal{D}(a)$ is bounded and satisfies $a(T u, v) = a(u, T v) = \langle u, v\rangle$, $u, v, \in \mathcal{D}(a)$. The Rayleigh quotient for reciprocals of eigenvalues of $A$ is
\begin{equation}
\frac{\langle u, u\rangle}{a(u)} = \frac{a(T u, u)}{a(u)}.
\notag
\end{equation}
In this form the Rayleigh-Ritz method produces lower bounds, \[\left(\Lambda_i^{(n)}\right)^{-1} \nearrow \lambda_i^{-1}\] as $n \to \infty$ from trial vectors $\{ w_i \}_{i=1}^{\infty}$ that are complete in $\mathcal{D}(a)$. Now for real $\tau$ let $A^{\tau}$ be the $\tau^{\text{th}}$ power of $A$ as defined by use of the spectral theorem \cite{Kato}. This is particularly simple to describe if $A$ has a complete set of eigenvectors $\{ u_i \}_{i=1}^{\infty}$, orthonormal in $\mathcal{H}$. Then $u=\sum_{i=1}^{\infty} \alpha_i \, u_i \in \mathcal{H}$ if and only if $\sum_{i=1}^{\infty} |\alpha_i|^2 <\infty$, and if $\tau>0$, $u \in \mathcal{D}(A^{\tau})$ if and only if $\sum_{i=1}^{\infty} \lambda_i^{2 \tau} \, |\alpha_i|^2 <\infty$, while for $\tau<0$ we take $\mathcal{D}(A^{\tau})$ to be the completion of $\mathcal{H}$ in the norm $\|A^{\tau} u \|^2 = \sum_{i=1}^{\infty} \lambda_i^{2 \tau} \, |\alpha_i|^2$. Herein $\alpha_i =\langle u, u_i \rangle$. Whether or not $A$ has a complete set of eigenvectors, $\mathcal{D}(A^{\tau})$
and $\mathcal{D}(A^{-\tau})$ are dual spaces, where duality is taken relative to $\langle \cdot, \cdot \rangle$. Since $A$ has a strictly positive lower bound, if $A$ is bounded all of these norms generate the same topology. Our interest is in unbounded $A$, in particular differential operators, and then if $\tau_1>\tau_2$, the topology corresponding to $\tau_1$ is strictly stronger than that for $\tau_2$. When $A$ is differential and $\tau$ is positive and not an integer, $A^{\tau}$ acts as a pseudodifferential operator (cf. \cite{Taylor}).

For $0\le \tau \le 1$, the Hilbert spaces $\mathcal{D}(A^{\tau})$ with inner product $\langle \cdot , \cdot \rangle_{\tau}= \langle A^{\tau} \, \cdot , A^{\tau} \, \cdot \rangle$ are the interpolation spaces by quadratic interpolation between $\mathcal{D}(A)$ with inner product $\langle \cdot , \cdot \rangle_{1}= \langle A \, \cdot , A \, \cdot \rangle$ and $\mathcal{H}$ with inner product
$\langle \cdot , \cdot \rangle_{0}= \langle A^{0} \, \cdot , A^{0} \, \cdot \rangle$ (cf. \cite{AAH}). This definition of quadratic interpolation has been used in spectral methods employing periodic Sobolev spaces on the line in \cite{Mercier}. But in the finite element literature the ``real method of
interpolation'' is common (cf. \cite{Bramble} \cite{Brenner-Scott}). Via the usual complexification of real Hilbert spaces both of these interpolation methods are known to be equivalent \cite{Lions-RPR}, \cite{Lions-Magenes}. Our parameter $\tau$ is denoted by $1-\theta$ in these references. Though interpolated norms are commonly used in static ($A u =f $) problems and time dependent problems, for eigenvalue problems such methods seem only to have been used for rates of convergence in standard norms (cf. \cite{Beattie-Greenlee-Ober}, \cite{Beattie-Greenlee}). The use of fractional powers is quite natural in spectral methods, and we will comment later on applicability of our theorems via finite element constructions.

Now observe that by the functional calculus for selfadjoint operators, $T$, i.e., the restriction or extension of $T$ as appropriate, is selfadjoint in $\mathcal{D}(A^{\tau})$ for any $\tau$. We will use the symbol $T$ for all of these operators, and note that in all of these spaces $T$ has the same eigenvalues and eigenvectors.

In theory the upper eigenvalues of $T$ (lower eigenvalues of $A$) can be estimated by the Rayleigh-Ritz method in any of these spaces. The Rayleigh quotient for $T$ in $\mathcal{D}(A^{\tau})$ is
\begin{equation}\notag
\frac{\langle A^{\tau} T u, A^{\tau} u\rangle}
{\langle A^{\tau} u, A^{\tau}  u \rangle}
=\frac{\langle A^{\tau-1} u, A^{\tau} u\rangle}
{\langle A^{\tau} u, A^{\tau}  u \rangle} \text{ for } u \neq 0 \text{ in } \mathcal{D}(A^{\tau}).
\end{equation}
In practice only the Rayleigh quotients with $\tau$ an integer or half integer are potentially useful for computation, and for $\tau\le 0$ this requires a useful formula for $A^{-1}$. Our method is to proceed initially in the abstract, and then use duality type arguments to estimate convergence of standard Rayleigh-Ritz eigenvectors in other interpolation norms. This will resolve the convergence questions raised in the nonlinear rotating string example of the previous section.

The three Rayleigh quotients
\begin{equation} \label{right-lehman-harmonic}
\displaystyle{ \frac{ \langle u,  A u \rangle}{\langle A u, A u
\rangle} }
\end{equation}
corresponding to $\tau=1$,
\begin{equation} \label{regular}
\displaystyle{ \frac{ \langle u,  u \rangle}{a(u)}  = \frac{
\langle u, u \rangle}{\langle A^{1/2} u, A^{1/2} u \rangle} }
\end{equation}
corresponding to $\tau =1/2$, and
\begin{equation} \label{left-lehman}
\displaystyle{ \frac{ \langle T u, u \rangle}{\langle u, u
\rangle} }
\end{equation}
corresponding to $\tau=0$, have been studied in the context of matrix eigenvalue problems in \cite{Beattie-ETNA}. Therein the quotient
\eqref{right-lehman-harmonic} is termed Right Definite Lehman with
zero shift or harmonic Ritz, \eqref{regular} regular Rayleigh-Ritz, and \eqref{left-lehman} Left Definite Lehman or dual harmonic Ritz.

The Rayleigh-Ritz method in $\mathcal{D}(A^{\tau})$, which we now denote by $\mathcal{H}_{\tau}$, proceeds by analogy with regular Rayleigh-Ritz. One selects a family of trial vectors $\{w_i\}_{i=1}^n \subset \mathcal{H}_{\tau}$ and forms the generalized matrix eigenvalue problem
\begin{equation} \notag
\big[\langle w_i, w_j\rangle_{\tau} \big]_{i,j=1}^n \, {\bf x} =\Lambda \big[\langle T w_i, w_j \rangle_{\tau} \big]_{i,j=1}^n \, {\bf x}.
\end{equation}
If the resulting matrix eigenvalues are denoted by
\begin{equation} \notag
\Lambda_1^{(n, \tau)} \le
\Lambda_2^{(n, \tau)} \le \ldots
\Lambda_n^{(n, \tau)},
\end{equation}
the first monotonicity principle again guarantees that for each $i=1, 2, \ldots, n$, $\lambda_i\le \Lambda_i^{(n, \tau)}$. Let
$u_1^{(n, \tau)}, u_2^{(n, \tau)}, \ldots, u_n^{(n, \tau)}$ be the corresponding eigenvectors orthonormalized in $\mathcal{H}_{\tau-1/2}$. Thus $\langle u_i^{(n, \tau)}, u_j^{(n, \tau)}\rangle_{\tau-1/2}=\delta_{ij}$ and
\[\langle u_i^{(n, \tau)}, u_j^{(n, \tau)}\rangle_{\tau}=\Lambda_i^{(n, \tau)} \, \delta_{ij}\]
where $\delta_{ij}$ is the Kronecker symbol.


Now let $P_{n, \tau}$ be the orthogonal projection operator in $\mathcal{H}_{\tau}$ onto $span \{ w_i \}_{i=1}^n$, the span of the first $n$ trial vectors. In general, $P_{n, \tau}$ depends on $\tau$. If $\{ w_i \}_{i=1}^{\infty}$ is complete in $\mathcal{H}_{\tau}$, $P_{n, \tau} \to I$ strongly in $\mathcal{H}_{\tau}$ as $n\to \infty$.

Completeness of $\{w_i\}_{i=1}^{\infty}$ in
$\mathcal{H}_{\tau}$ is also sufficient to guarantee convergence
of $\Lambda_{i}^{(n,\tau)}$ to $\lambda_i$ as $n\to \infty$ for
each eigenvalue $\lambda_i$ of $A$ below $\lambda_{\infty}$
(cf. \cite{Weinberger-SIAM}). Our interest is in
rate of convergence estimates obtainable by spectral or finite
element methods from $\| \left(I-P_{n,\tau}\right) Q\|_{\tau}$ and
$\| \left(I-P_{n,\tau}\right) Q\|_{\tau-1/2}$, where $Q$ is a
spectral projection for $A$ onto the span of finitely many of the
eigenvectors $u_1, u_2, \ldots$. Note that while, in general,
$P_{n, \tau}$ depends on $\tau$, the spectral projection $Q$ is
independent of $\tau$.

\begin{theorem} \label{theo3.1} Let $Q_{\ell}$ be the
spectral projection for $A$ onto $span \{u_1, \ldots, u_{\ell} \}$. If
$\{w_i\}_{i=1}^{\infty}$ is complete in $\mathcal{H}_{\tau}$, then
for large $n$ and each $j=1, \ldots, \ell$, {\small
\[0 \le \Lambda_{j}^{(n,\tau)} - \lambda_j \le
\Lambda_{\ell}^{(n,\tau)} \, \left( \| \left(I-P_{n,\tau}\right)
Q_{\ell}\|_{\tau-1/2}^2+ 2 \lambda_{\ell} \left( \sum_{i=1}^{\ell}
\lambda_i^{-1} \right)  \| \left(I-P_{n,\tau}\right)
Q_{\ell}\|_{\tau}^2 \right)\] }where $\| \cdot \|_{\tau}$ and  $\| \cdot
\|_{\tau-1/2}$ denote the norms of bounded operators in the
corresponding spaces. Furthermore, there exist eigenvectors
$u_{j,n}$ of $A$ in the $\lambda_j$ eigenspace, normalized in
$\mathcal{H}_{\tau-1/2}$, such that \begin{equation} \notag
\|
u_{j,n}-u_{j}^{(n,\tau)}\|_{\tau}^2 \le 4 \lambda_j \, \left(1+
\frac{\lambda_j}{\kappa \delta}\right)^2 \, \|
\left(I-P_{n,\tau}\right) Q\|_{\tau-1/2}^2+ \Lambda_{j}^{(n,\tau)}
- \lambda_j, \text{ and }
\end{equation}
\begin{equation} \notag
\|
u_{j,n}-u_{j}^{(n,\tau)}\|_{\tau-1/2} \le 2 \, \left(1+
\frac{\lambda_{\ell}}{\kappa \delta}\right) \, \|
\left(I-P_{n,\tau}\right) Q\|_{\tau-1/2}
\end{equation}
where $Q$ is the spectral projection onto the $\lambda_j$
eigenspace, \[\delta=dist (\{\lambda_j\}, \sigma(A)-
\{\lambda_j\}),\] $0<\kappa<1$, and $\sigma(A)$ is the spectrum of
$A$.
\end{theorem}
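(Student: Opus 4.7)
The strategy is to adapt classical Babu\v{s}ka--Osborn Rayleigh--Ritz error analysis to the interpolation scale $\mathcal{H}_\tau$. I would begin by reformulating the generalized matrix eigenvalue problem: since $\langle Tu,u\rangle_\tau=\|u\|_{\tau-1/2}^{2}$ by the spectral calculus, the $\Lambda_j^{(n,\tau)}$ are the eigenvalues of the variational problem of finding $v\in V_n:=P_{n,\tau}\mathcal{H}_\tau$ with $\langle v,w\rangle_\tau=\Lambda\langle v,w\rangle_{\tau-1/2}$ for every $w\in V_n$, giving the min--max formula
\[
\Lambda_j^{(n,\tau)}=\min_{\substack{S\subset V_n\\ \dim S=j}}\max_{v\in S\setminus\{0\}}\frac{\|v\|_\tau^{2}}{\|v\|_{\tau-1/2}^{2}}.
\]
Because the $u_i$ form an orthogonal basis of each $\mathcal{H}_\sigma$ with $\|u_i\|_\sigma^{2}=\la_i^{2\sigma}\|u_i\|_0^{2}$, the lower bound $\Lambda_j^{(n,\tau)}\ge\la_j$ and the comparison $\|u\|_\tau^{2}\le\la_\ell\|u\|_{\tau-1/2}^{2}$ on $Q_\ell\mathcal{H}$ are immediate.

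For the eigenvalue upper bound I would test the min--max formula on the $\ell$-dimensional subspace $S=P_{n,\tau}(Q_\ell\mathcal{H})$ (of full dimension once $n$ is large). For $v=P_{n,\tau}w$ with $w=\sum_{i=1}^\ell c_i u_i$ I would bound the numerator by $\|v\|_\tau^{2}\le\|w\|_\tau^{2}$ and, writing $v=w-(I-P_{n,\tau})w$, expand the denominator as $\|w\|_{\tau-1/2}^{2}$ minus cross and quadratic corrections whose sizes are controlled by $\|(I-P_{n,\tau})Q_\ell\|_{\tau-1/2}$ and $\|(I-P_{n,\tau})Q_\ell\|_\tau$. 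Applying the identity $\|u_i\|_{\tau-1/2}^{2}=\la_i^{-1}\|u_i\|_\tau^{2}$ to the cross term, together with Cauchy--Schwarz on the spectral coefficients, should produce the weighted combination $\la_\ell\sum_{i=1}^\ell\la_i^{-1}$. Dividing, maximizing over $v\in S$, and bounding the resulting ratio by $\Lambda_\ell^{(n,\tau)}$ (valid for $j\le\ell$) will give the displayed eigenvalue inequality.

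For the eigenvector bounds I would decompose $u_j^{(n,\tau)}=Qu_j^{(n,\tau)}+(I-Q)u_j^{(n,\tau)}$, orthogonal in every $\mathcal{H}_\sigma$ since $A$ commutes with $Q$, and set $u_{j,n}$ to be $Qu_j^{(n,\tau)}$ renormalized in $\mathcal{H}_{\tau-1/2}$. A direct Pythagoras computation using $\|u_j^{(n,\tau)}\|_{\tau-1/2}=1$ converts $\|u_j^{(n,\tau)}-u_{j,n}\|_{\tau-1/2}^{2}$ and $\|u_j^{(n,\tau)}-u_{j,n}\|_\tau^{2}$ into explicit expressions in $a=\|Qu_j^{(n,\tau)}\|_{\tau-1/2}$ and $\|(I-Q)u_j^{(n,\tau)}\|_\sigma$; the extra weight appearing in the $\tau$-norm contribution of the orthogonal component is absorbed into $\Lambda_j^{(n,\tau)}-\la_j$ via the spectral identity $\Lambda_j^{(n,\tau)}-\la_j=\sum_{\la_i\ne\la_j}\la_i^{2\tau-1}(\la_i-\la_j)|c_i|^{2}$. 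The remaining quantity $\|(I-Q)u_j^{(n,\tau)}\|_{\tau-1/2}$ is controlled by a spectral-gap argument: the Galerkin orthogonality $\langle(A-\Lambda_j^{(n,\tau)})u_j^{(n,\tau)},w\rangle_{\tau-1/2}=0$ for $w\in V_n$ (which follows from the identity $\langle Au,w\rangle_{\tau-1/2}=\langle u,w\rangle_\tau$ and the Ritz condition), combined with the resolvent bound $\|(A-\la_j)^{-1}(I-Q)\|_\sigma\le 1/\delta$, reduces the estimate to $\|(I-P_{n,\tau})Qu_j^{(n,\tau)}\|_{\tau-1/2}$; the eigenvalue perturbation $\Lambda_j^{(n,\tau)}-\la_j$ is then absorbed into $\delta$, producing the factor $\ka\delta$ once $n$ is large enough to satisfy $\Lambda_j^{(n,\tau)}-\la_j\le(1-\ka)\delta$.

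I expect the principal obstacle to be bookkeeping rather than any single analytic step. The estimates must track the interplay between the two norms $\|\cdot\|_\tau$ and $\|\cdot\|_{\tau-1/2}$, which differ by an application of $A^{1/2}$, through projections $P_{n,\tau}$ that are orthogonal in $\mathcal{H}_\tau$ but not in $\mathcal{H}_{\tau-1/2}$. Producing the precise constants $\Lambda_\ell^{(n,\tau)}$, $2\la_\ell\sum_{i=1}^\ell\la_i^{-1}$, and $\la_j(1+\la_j/(\ka\delta))^{2}$ will require careful coordinate-wise use of the spectral decomposition; the saving grace is that powers of $A$ commute with $Q$ and $Q_\ell$, which keeps the resolvent bound $1/\delta$ uniform across the whole scale $\{\mathcal{H}_\sigma\}$ and underpins the uniformity of these constants.
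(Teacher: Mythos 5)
Your eigenvalue argument is essentially the paper's: the paper also tests the min--max principle on $P_{n,\tau}(Q_\ell\mathcal{H})$, bounds the numerator using that $P_{n,\tau}$ is an $\mathcal{H}_\tau$-orthogonal projection, and expands the denominator, handling the cross term through $\langle u_i, u-P_{n,\tau}u\rangle_{\tau-1/2}=\lambda_i^{-1}\langle u_i-P_{n,\tau}u_i,\,u-P_{n,\tau}u\rangle_{\tau}$ and Cauchy--Schwarz, which is exactly how the weighted constant $2\lambda_\ell\sum_{i\le\ell}\lambda_i^{-1}$ arises. That half of your plan would go through.

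The eigenvector half has a genuine gap. You propose to set $u_{j,n}= Qu_j^{(n,\tau)}$ (renormalized) and to control $\|(I-Q)u_j^{(n,\tau)}\|_{\tau-1/2}$ by ``Galerkin orthogonality plus the resolvent bound, reducing to $\|(I-P_{n,\tau})Qu_j^{(n,\tau)}\|_{\tau-1/2}$.'' That reduction does not follow from the stated ingredients. Writing $(I-Q)u_j^{(n,\tau)}=(A-\Lambda_j^{(n,\tau)})^{-1}(I-Q)\,r_n$ with $r_n=(A-\Lambda_j^{(n,\tau)})u_j^{(n,\tau)}$, the resolvent bound only trades the problem for the size of the residual $r_n$; but $r_n$ lives naturally in the weaker space $\mathcal{H}_{\tau-1}$, and Galerkin orthogonality (orthogonality of $r_n$ to $V_n$ in the $\mathcal{H}_{\tau-1/2}$ pairing) does not bound its norm by the eigenspace projection error $\|(I-P_{n,\tau})Q\|$: any duality argument of the kind you sketch ends up requiring approximation of essentially arbitrary elements of $(I-Q)\mathcal{H}$ by the trial space, i.e.\ global approximation properties or compactness of $T=A^{-1}$. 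This is precisely the localized estimate that Babu\v{s}ka--Osborn and Chatelin obtain \emph{under compactness}, and the paper explicitly notes that relaxing compactness there is open; Theorem 3.1 assumes no compactness. The paper's proof therefore argues in the opposite direction: for an exact eigenvector $u$ it expands $P_{n,\tau}u$ in the Ritz basis of $V_n$, uses the identity $(\Lambda_i^{(n,\tau)}-\lambda_j)\langle P_{n,\tau}u,u_i^{(n,\tau)}\rangle_{\tau-1/2}=\lambda_j\langle u-P_{n,\tau}u,\,u_i^{(n,\tau)}\rangle_{\tau-1/2}$ together with the gap $|\Lambda_i^{(n,\tau)}-\lambda_j|\ge\kappa\delta$ (valid for large $n$ when $\lambda_i\ne\lambda_j$) to show $u$ lies within $\bigl(1+\lambda_j/(\kappa\delta)\bigr)\|(I-P_{n,\tau})Q\|_{\tau-1/2}$ of the span of the Ritz vectors with nearby Ritz values, and then selects the $n$-dependent exact eigenvectors $u_{j,n}$ by an orientation argument (simple case) or a diagonalization of the matrix $[\langle P_{n,\tau}u_{\ell+k,n},u_{\ell+m}^{(n,\tau)}\rangle]$ (multiple case); this produces the factor $2$, and the $\tau$-norm bound then follows from the algebraic identity $\|u_{j,n}-u_j^{(n,\tau)}\|_\tau^2=\lambda_j\|u_{j,n}-u_j^{(n,\tau)}\|_{\tau-1/2}^2+\Lambda_j^{(n,\tau)}-\lambda_j$, not from a spectral expansion of $(I-Q)u_j^{(n,\tau)}$. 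Unless you can supply an argument for your reduction step that avoids compactness, you should switch to this direction of comparison.
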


Theorem \ref{theo3.1} is obtained by translating the proof of the corresponding theorem in \cite{Strang-Fix} into our context, with attention to specific constants in their estimates, and more detail on eigenvectors corresponding to multiple eigenvalues. The proof is included as the Appendix. A strength of the method is that compactness of $T$ is not required, and the eigenvalue and eigenvector estimates scale correctly if $A$ is replaced by $(const.) \, A$ (cf. \cite{Strang-Fix}, \cite{Todor}). But a weakness of the method is that in order to estimate $\lambda_k$, one must employ the estimates for $\lambda_1, \lambda_2, \ldots, \lambda_{k-1}$. Though this is consistent with the progression of the Rayleigh-Ritz method, in such problems as the $L-$shaped membrane considered in \cite{Babuska-Osborn}, \cite{Strang-Fix}, the lowest eigenvalues have eigenvectors that are less smooth than those for higher eigenvalues. This gives slower rates of convergence. In \cite{Babuska-Osborn}, \cite{Chatelin} localized projection estimates (though not in fractional norms), i.e., with $Q_{\ell}$ replaced by the spectral projection for $A$ onto a single eigenspace of $A$ are obtained. Their methods use compactness of $T$, and it is an open problem whether the hypothesis of compactness can be relaxed.

Before proceeding to the abstract theorems for spectral implementation of Theorem \ref{theo3.1} we recall a basic interpolation theorem for use in the sequel.
Let $\mathcal{V}_1$ and $\mathcal{V}_0$
be Hilbert spaces with $\mathcal{V}_1$ continuously contained in
$\mathcal{V}_0$, written $\mathcal{V}_1 \subset_{c} \mathcal{V}_0$,
i.e., $\mathcal{V}_1 \subset \mathcal{V}_0$ with
the injection from $\mathcal{V}_1$ into $\mathcal{V}_0$
continuous, and $\mathcal{V}_1$ dense in $\mathcal{V}_0$. Then
there is a positive definite selfadjoint operator $S$ in
$\mathcal{V}_0$ such that the inner product $\langle \cdot , \cdot
\rangle_1$ in $\mathcal{V}_1$ is given by
\[\langle u , v \rangle_1 = \langle S u , S v \rangle_0, \]
for all $u, v \in \mathcal{V}_1=\mathcal{D}(S)$, where $\langle
\cdot , \cdot \rangle_0$ is the inner product in $\mathcal{V}_0$.
For $0\le \tau \le 1$, the $\tau^{th}$ interpolation space by
quadratic interpolation between $\mathcal{V}_1$ and
$\mathcal{V}_0$ is the Hilbert space $\mathcal{V}_{\tau}=
\mathcal{D}(S^{\tau})$ with inner product $\langle u , v
\rangle_{\tau} = \langle S^{\tau} u , S^{\tau} v \rangle_0$.

\begin{proposition} \label{prop4.1} Let $\mathcal{W}_1$ and
$\mathcal{W}_0$ be a second pair of Hilbert spaces with
$\mathcal{W}_1 \subset_{c} \mathcal{W}_0$  and $\mathcal{W}_1$ dense in $\mathcal{W}_0$. Further, let $C$ be a continuous linear mapping of
$\mathcal{V}_0$ into $\mathcal{W}_0$ with bound $m_0$ which is
also continuous from $\mathcal{V}_1$ into $\mathcal{W}_1$ with
bound $m_1$. Then for each $\tau \in (0,1)$, $C$ is continuous
from $\mathcal{V}_{\tau}$ into $\mathcal{W}_{\tau}$ with bound
$m_{\tau} \le m_1^{\tau} m_0^{1-\tau}$.
\end{proposition}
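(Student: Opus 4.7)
The plan is to establish this by the complex method of interpolation, using Hadamard's three lines theorem applied to a suitable operator-valued holomorphic function. Let $R$ denote the positive definite selfadjoint operator on $\mathcal{W}_0$ that plays the role for the $\mathcal{W}$-scale that $S$ plays for the $\mathcal{V}$-scale, so $\mathcal{W}_\tau = \mathcal{D}(R^\tau)$ with $\|w\|_{\mathcal{W}_\tau} = \|R^\tau w\|_{\mathcal{W}_0}$. The key observation is that the bound $\|C u\|_{\mathcal{W}_\tau} \le m_\tau \|u\|_{\mathcal{V}_\tau}$ is equivalent to $\|R^\tau C S^{-\tau} v\|_{\mathcal{W}_0} \le m_\tau \|v\|_{\mathcal{V}_0}$ after the substitution $v = S^\tau u$, so I want to control the operator-norm of $F(\tau) := R^\tau C S^{-\tau}$ acting $\mathcal{V}_0 \to \mathcal{W}_0$.

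Via the spectral theorems for $S$ and $R$, extend $F$ to a function $F(z) = R^z C S^{-z}$ of complex $z$, and test it via the scalar function $f(z) = \langle C S^{-z} v, R^{\bar z} w \rangle_{\mathcal{W}_0}$ for vectors $v, w$ chosen from appropriate dense subspaces. Specifically, let $P_n$ be the spectral projection of $S$ onto the interval $[1/n, n]$ and $Q_n$ the analogous spectral projection of $R$; for $v$ in $\mathrm{range}(P_n)$ the map $z \mapsto S^{-z} v$ is entire with norm controlled by $n^{|\mathrm{Re}\, z|}$, and similarly for $R^{\bar z} w$ with $w \in \mathrm{range}(Q_n)$. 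This makes $f$ holomorphic and bounded on the closed strip $0 \le \mathrm{Re}\, z \le 1$, which is precisely the hypothesis needed to invoke Hadamard's three lines theorem.

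On the boundary lines one uses that $R^{it}$ and $S^{-it}$ are unitary on their respective spaces (because $R$ and $S$ are positive selfadjoint), so that on $\mathrm{Re}\, z = 0$ the hypothesis $\|C\|_{\mathcal{V}_0 \to \mathcal{W}_0} \le m_0$ yields $|f(it)| \le m_0 \|v\|_{\mathcal{V}_0}\|w\|_{\mathcal{W}_0}$. On $\mathrm{Re}\, z = 1$, factor $R^{1+it} C S^{-1-it} = R^{it}\bigl(R C S^{-1}\bigr) S^{-it}$ and use that $\|R C S^{-1}\|_{\mathcal{V}_0 \to \mathcal{W}_0}$ equals $\|C\|_{\mathcal{V}_1 \to \mathcal{W}_1} \le m_1$, giving $|f(1+it)| \le m_1 \|v\|_{\mathcal{V}_0}\|w\|_{\mathcal{W}_0}$. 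The three lines theorem then produces $|f(\tau)| \le m_0^{1-\tau} m_1^\tau \|v\|_{\mathcal{V}_0}\|w\|_{\mathcal{W}_0}$.

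The final step is a removal-of-cutoff argument. Given $u \in \mathcal{V}_\tau$, apply the boundary-to-interior estimate with $v = P_n S^\tau u$ and $w = Q_m R^\tau C S^{-\tau} v / \|Q_m R^\tau C S^{-\tau} v\|_{\mathcal{W}_0}$; this yields $\|Q_m R^\tau C P_n u\|_{\mathcal{W}_0} \le m_0^{1-\tau} m_1^\tau \|S^\tau u\|_{\mathcal{V}_0}$, uniformly in $m$ and $n$. Letting $m \to \infty$ and invoking the standard spectral characterization $w \in \mathcal{D}(R^\tau) \iff \sup_m \|R^\tau Q_m w\|_{\mathcal{W}_0} < \infty$ places $C P_n u \in \mathcal{W}_\tau$ with the desired bound in terms of $\|P_n u\|_{\mathcal{V}_\tau}$; letting $n \to \infty$ and using that $\{C P_n u\}$ is Cauchy in $\mathcal{W}_\tau$ while converging to $Cu$ in $\mathcal{W}_0$ by continuity delivers the conclusion. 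The principal technical obstacle is making $F(z)$ rigorously holomorphic and bounded on the strip despite $S$ and $R$ being unbounded; this is exactly what the spectral-cutoff construction addresses, so the argument reduces to routine verification once those cutoffs are in place.
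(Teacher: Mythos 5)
Your proof is correct in substance, but note that the paper does not actually prove Proposition 4.1: it is quoted as a known interpolation theorem, with proofs delegated to \cite{AAH} and \cite{Lions-Magenes}. What you have written is essentially the standard argument behind those citations (the Heinz--Kato/complex-method proof): reduce the claim to a bound on $R^{\tau}CS^{-\tau}$, test it with the scalar function $f(z)=\langle CS^{-z}v,\,R^{\overline{z}}w\rangle_{\mathcal{W}_0}$, secure holomorphy and boundedness on the strip by spectral cutoffs, use unitarity of the imaginary powers on the boundary lines together with $\|RCS^{-1}\|_{\mathcal{V}_0\to\mathcal{W}_0}\le m_1$, and invoke the three-lines theorem. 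Two small repairs are needed, neither of which is a genuine gap. First, in the removal-of-cutoff step the vector you normalize should be $R^{\tau}Q_mCS^{-\tau}v$ rather than $Q_mR^{\tau}CS^{-\tau}v$: the latter presupposes $CS^{-\tau}v\in\mathcal{D}(R^{\tau})$, which is exactly what is being proved; since $Q_m$ commutes with the spectral calculus of $R$, the duality computation with the corrected ordering yields $\|R^{\tau}Q_mCS^{-\tau}v\|_{\mathcal{W}_0}\le m_1^{\tau}m_0^{1-\tau}\|v\|_{\mathcal{V}_0}$ uniformly in $m$, and monotone convergence in $m$ finishes that step. Second, the cutoff $P_n$ on the $S$ side is superfluous: because $S$ is positive definite, $S^{-z}$ is bounded for $0\le\mathrm{Re}\,z\le1$ and $z\mapsto S^{-z}v$ is holomorphic in the open strip and continuous and bounded on the closed strip for every $v\in\mathcal{V}_0$, so you may take $v=S^{\tau}u$ directly and drop the final Cauchy-sequence limit in $n$ altogether. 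With these cosmetic adjustments your argument is a complete, self-contained derivation of a result the paper only imports from the literature.
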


Proofs of Proposition \ref{prop4.1} can be found in \cite{AAH},
\cite{Lions-Magenes}. An immediate consequence of this proposition is that changes between equivalent norms of each of $\mathcal{V}_{1}$ and $\mathcal{V}_{0}$ only affects $\mathcal{V}_{\tau}$ up to an equivalent norm. This follows by employing the identity
map for both $C$ and $C^{-1}$.

While rate of convergence results for the linear eigenvalue problem embedded
in the rotating string problem can be obtained by integration by
parts (cf. \cite{Bernardi-Maday}, \cite{Birk-Fix}, \cite {Boyd}), the fractional norm estimates needed for sharp approximation of the bifurcation coefficients require a more extensive theory. It is essential to be able to determine when a function is in the domain of a fractional power of a differential operator from knowledge of the smoothness and the homogeneous boundary conditions satisfied by the function. We now sketch results which resolve this for our purposes.

If $\Omega$ is a bounded domain in $\rz^n$ with smooth boundary $\partial \Omega$, results for general normal homogeneous boundary conditions are obtained in \cite{Grisvard}. Denote by
$H_{\mathbb{B}}^m (\Omega)$ the closed subspace of $H^m(\Omega)$ consisting of the elements $u \in H^m(\Omega)$ for which
\begin{equation} \notag
B_j u = 0 \text{ on } \partial \Omega, 0 \le j \le k,
\end{equation}
where the $B_j$'s are differential operators of order $m_j <m$. The boundary operators $\{B_j\}$ are assumed to have smooth coefficients, to have distinct orders, and to be ``normal'', i.e., to have the form
\begin{equation} \notag
B_j  = \frac{\partial^{m_j} }{\partial{\nu}^{m_j}} + \text{ terms of order at most  } m_j
\end{equation}
on $\partial \Omega$. $\partial/\partial \nu$ denotes the normal derivative. Then
if $\mathcal{V}_1 = H_{\mathbb{B}}^m (\Omega)$, $\mathcal{V}_0 = L^2 (\Omega)$ and
$\tau \in (0,1)$ is such that $m \tau -1/2 \neq m_j$ for any $j=1, \ldots, k$
{\small \begin{equation} \notag
\mathcal{V}_{\tau} = H_{\mathbb{B}}^{m \tau} (\Omega) = \big \{ u \in
H^{m \tau} (\Omega): B_j u=0 \text{ on } \partial \Omega \text{ for all } j
\text{ such that } m_j<m \tau -1/2 \big \}.
\end{equation}}

\noindent Note that for these values of $\tau$ the spaces $H_{\mathbb{B}}^{m \tau} (\Omega)$ are closed subspaces of $H^{m \tau} (\Omega)$. For each of the ``exceptional'' values $\tau$, $\mathcal{V}_{\tau}$ has a strictly stronger norm than that of $H_{\mathbb{B}}^{m \tau} (\Omega)$, the closure of $H_{\mathbb{B}}^{m \tau} (\Omega)$ in $H^{m \tau} (\Omega)$.

\textbf{Remark.} For domains with corners, let $\Omega$ be a Lipschitzian Graph Domain in $\rz^n$ (cf. \cite{AAS2}). When $n=1$, this means an open interval. Then for any positive integer $m$, if we let
$\mathcal{V}_1$ be the Sobolev space $H^m(\Omega)$ and $\mathcal{V}_0=H^0(\Omega)=L^2(\Omega)$, then $\mathcal{V}_{\tau}=H^{m \tau} (\Omega)$, $0\le \tau \le 1$ (cf. \cite{AAS2} where the theory is given in the language of Bessel potentials). For homogeneous Dirichlet boundary conditions and $\Omega$ a Lipschitzian Graph Domain, the following results are contained in \cite{Greenlee}
(cf. also \cite{Bramble}). If, as usual, for $\alpha>0$, $H_0^{\alpha}(\Omega)$ denotes the closure of $C_0^{\infty}(\Omega)$ in $H^{\alpha}(\Omega)$,
$\mathcal{V}_{1}=H_0^{m} (\Omega)$, and $\mathcal{V}_{0}=H^{0} (\Omega)$,
$\mathcal{V}_{\tau}=H_0^{m \tau} (\Omega)$ whenever $\tau \in (0,1)$ is such that $m \tau + 1/2$ is not an integer. For each ``exceptional'' value of $\tau$ the space $\mathcal{V}_{\tau}$ has a strictly stronger norm than that of
$H_0^{m \tau} (\Omega)$. If, in addition, $\partial \Omega$, the boundary of $\Omega$, is smooth enough that the selfadjoint elliptic operator $A$ over $\Omega$ defined by
\begin{equation}
\notag
\left(A u, v\right)_{L^2(\Omega)} = \left(u, v \right)_{H^{m}(\Omega)} \text{ for all } v \in H_0^m(\Omega)
\end{equation}
has domain $H^{2m}(\Omega) \cap H_0^m(\Omega)$ (regularity at the boundary of weak solutions), then if $\mathcal{V}_1=H^{2m}(\Omega) \cap H_0^m(\Omega)$ and
$\mathcal{V}_0=H_0^m(\Omega)$, $\mathcal{V}_{\tau}=H^{2 m\tau} (\Omega) \cap H_0^m(\Omega)$ for all $\tau \in (0,1)$. Since $\mathcal{D}(A^{1/2})=H_0^m(\Omega)$ (cf. \cite{Kato}) this resolves the interpolation problem between $H^{2m}(\Omega) \cap H_0^m(\Omega)$ and $L^2(\Omega)$. This statement is slightly stronger than that of Theorem 5.3 of \cite{Greenlee}, but is exactly what is proved there. As stated here, an additional possible application is to the homogeneous Dirichlet problem for the biharmonic operator on a rectangle in $\rz^2$ (cf. \cite{Grisvard2}).

\section{Spectral Implementation} \label{spectral}

Let $\mathcal{G}_0$ be a Hilbert space consisting of the same elements as $\mathcal{H}_0$ with an equivalent norm. We write $\left( \cdot, \cdot\right)_0$ and
$| \cdot |_0$ for the inner product and norm in $\mathcal{G}_0$, respectively. Further, let $M$ be a positive definite selfadjoint operator in $\mathcal{G}_0$ with
$M^{-1}$ compact. Denote by
\[0<\mu_1\le \mu_2 \le \ldots \le \mu_k \le \ldots \to \infty\]
the eigenvalues of $M$, counted according to multiplicity, with corresponding eigenvectors $q_1, q_2, \ldots, q_k, \ldots$ orthonormal in $\mathcal{G}_0$.  The eigenvectors $\{q_k \}$ will serve as trial vectors for the eigenpairs of $A$. For this purpose let $R_n$ be the $\mathcal{G}_0$-orthogonal
projection onto $span \{q_1, \ldots, q_n \}$, and
let $P_{n, \tau}$ be, as previously, the $H_{\tau}$-orthogonal projection onto
$span \{q_1, \ldots, q_n \}$.
In the rotating string example begun in Section 2, $\mathcal{G}_0$ will be $L^2(a,b)$ with Lebesgue measure, while $\mathcal{H}_0$ will be the $L^2$ space on (a,b) with measure $\rho(x) dx$.

\begin{proposition} \label{prop4.2} Assume that
\begin{equation} \label{4.1}
\mathcal{D}\left(M^{1/2}\right) \subset_{c} \mathcal{D}\left(A^{1/2}\right)
\end{equation}
and let $0 \le \tau \le 1/2$. Then if $\sigma \ge \tau$ and the eigenvectors $u_1, u_2, \ldots, u_{\ell}$ of $A$ are all in $\mathcal{D}\left(M^{\sigma} \right)$,
\[\Lambda_{j}^{(n, \tau)} = \lambda_j + o \left(\mu_{n+1}^{2 \tau - 2 \sigma}\right),  \quad 1 \le j \le \ell, \]
and
\[\|u_{j,n} - u_j^{(n,\tau)}\|_{\tau}= o\left(\mu_{n+1}^{\tau-
\sigma}\right), \quad 1\le j \le \ell,\]
both as $n\to \infty$.
\end{proposition}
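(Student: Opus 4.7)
\emph{Plan.} The strategy is to apply Theorem~\ref{theo3.1}, so the task reduces to showing that both $\|(I-P_{n,\tau})Q_\ell\|_\tau$ and $\|(I-P_{n,\tau})Q_\ell\|_{\tau-1/2}$ are $o(\mu_{n+1}^{\tau-\sigma})$. The starting observation is that, since $P_{n,\tau}$ is the orthogonal projection in $\mathcal{H}_\tau$ onto $V_n:=\operatorname{span}\{q_1,\ldots,q_n\}$ and $R_n v\in V_n$, the best-approximation property gives
\[\|(I-P_{n,\tau})v\|_\tau\le\|(I-R_n)v\|_\tau\]
for every $v\in\mathcal{H}_\tau$. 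Thus everything reduces to controlling $R_n$ in the $\mathcal{H}_\tau$ norm.

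The first substantive step is to compare $\|\cdot\|_\tau$ with the natural norm $\|M^\tau\cdot\|_0$ of $\mathcal{D}(M^\tau)$. The identity map is continuous $\mathcal{D}(M^{1/2})\to\mathcal{H}_{1/2}$ by hypothesis~\eqref{4.1} and continuous $\mathcal{G}_0\to\mathcal{H}_0$ since the two norms are equivalent; applying Proposition~\ref{prop4.1} with interpolation parameter $2\tau$ yields a constant $C_\tau$ with
\[\|v\|_\tau\le C_\tau\,\|M^\tau v\|_0\quad\text{for every }v\in\mathcal{D}(M^\tau),\]
valid precisely when $0\le 2\tau\le 1$; this is the step that forces the restriction $\tau\le 1/2$. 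Expanding $v\in\mathcal{D}(M^\sigma)$ as $v=\sum_k c_k q_k$ with $c_k=(v,q_k)_0$, and using $\tau-\sigma\le 0$ together with $\mu_k\ge\mu_{n+1}$ for $k>n$, one obtains
\[\|M^\tau(I-R_n)v\|_0^2=\sum_{k>n}\mu_k^{2\tau}|c_k|^2\le\mu_{n+1}^{2(\tau-\sigma)}\sum_{k>n}\mu_k^{2\sigma}|c_k|^2.\]
The tail sum on the right tends to $0$ because $\sum_k\mu_k^{2\sigma}|c_k|^2=\|M^\sigma v\|_0^2<\infty$, and this is what upgrades the crude $O$-estimate to $o$. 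Combining the three ingredients gives $\|(I-P_{n,\tau})v\|_\tau=o(\mu_{n+1}^{\tau-\sigma})$ for every $v\in\mathcal{D}(M^\sigma)$.

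Next I would transfer this bound, applied to each eigenvector $u_j$ with $1\le j\le\ell$, to the two required operator norms by exploiting the spectral decomposition of $A$: for $w=\sum_k d_k u_k$ with $d_k=\langle w,u_k\rangle$ one has $|d_j|\le\lambda_j^{-\tau}\|w\|_\tau$ and $|d_j|\le\lambda_j^{1/2-\tau}\|w\|_{\tau-1/2}$, so summing over $j\le\ell$ and using $\|(I-P_{n,\tau})u_j\|_{\tau-1/2}\le\lambda_1^{-1/2}\|(I-P_{n,\tau})u_j\|_\tau$ (since $\tau-1/2\le\tau$) produces
\[\|(I-P_{n,\tau})Q_\ell\|_\tau+\|(I-P_{n,\tau})Q_\ell\|_{\tau-1/2}=o(\mu_{n+1}^{\tau-\sigma}).\]
Feeding these estimates into Theorem~\ref{theo3.1}, and observing that $\Lambda_\ell^{(n,\tau)}\to\lambda_\ell$ is bounded for large $n$, yields the eigenvalue estimate $\Lambda_j^{(n,\tau)}-\lambda_j=o(\mu_{n+1}^{2(\tau-\sigma)})$, and plugging both bounds back into the eigenvector inequality of the same theorem yields $\|u_{j,n}-u_j^{(n,\tau)}\|_\tau=o(\mu_{n+1}^{\tau-\sigma})$. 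The main obstacle is the interpolation step, which is what imports the spectral approximation properties of the auxiliary operator $M$ into the topology generated by $A$ and which dictates the admissible range of $\tau$; once that translation is in place, the remainder of the argument is organized bookkeeping with the two spectral expansions.
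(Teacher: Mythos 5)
Your proposal is correct and follows essentially the same route as the paper's proof: reduce to Theorem \ref{theo3.1}, replace $P_{n,\tau}$ by the spectral projection $R_n$ via the best-approximation property, apply Proposition \ref{prop4.1} to the identity map between the pairs $(\mathcal{D}(M^{1/2}),\mathcal{G}_0)$ and $(\mathcal{D}(A^{1/2}),\mathcal{H}_0)$ at parameter $2\tau$ (exactly where $\tau\le 1/2$ enters) to get $\|v\|_{\tau}\le c\,|M^{\tau}v|_0$, and finish with the spectral tail estimate for $v\in\mathcal{D}(M^{\sigma})$, handling the $(\tau-1/2)$-norm because it is weaker than the $\tau$-norm. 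Your explicit finite-rank bookkeeping converting the per-eigenvector estimates into operator-norm bounds on $(I-P_{n,\tau})Q_{\ell}$ is left implicit in the paper but is the same idea.
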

Here ``o'' is the usual Landau symbol ``little oh''.

\begin{proof}
First note that \eqref{4.1} implies completeness of $\{q_k\}$ in $\mathcal{D}(A^{1/2})$. Then if $v\in\mathcal{D}(M^{\sigma})$

\begin{align}
\|\left(P_{n, \tau}- I\right) v\|_{\tau} & = \| A^{\tau}
\left(P_{n, \tau}- I\right) v\|_{0} \notag \\
& = \|  A^{\tau} \left(P_{n, \tau}- I\right) \left(R_n-I\right)
v\|_{0} \notag \\
& \le  \|  A^{\tau} \left(R_n- I\right) v\|_{0} \notag
\end{align}
since $P_{n, \tau}$ is an orthogonal projection in $\mathcal{H}_{\tau}= \mathcal{D}(A^{\tau})$. Now since the identity map, I, is continuous from $\mathcal{G}_0$ into $\mathcal{H}_0$ and also from $\mathcal{D}(M^{1/2})$
into $\mathcal{D}(A^{1/2})$, it follows from Proposition \ref{prop4.1}
that for $0\le \tau \le 1/2$,
\[\|A^{\tau} v\|_0 \le c \, |M^{\tau} v|_0, \text{ for all } v \in \mathcal{D}(M^{\tau}).\]
Herein, and in the sequel, $c$ is a generic constant, whose value varies from place to place, but is always independent of the order of the approximation ($n$ here).
Thus for $v \in \mathcal{D}(M^{\sigma})$

\begin{align}
\|\left(P_{n, \tau}- I\right) v\|_{\tau} & \le  c \,
|M^{\tau} (R_n-I) v|_0 \notag \\
&=c \, |(R_n -I)M^{\tau} v|_0,\notag
\end{align}
since $R_n$ is a spectral projection for $M$, and hence $M^{\tau}$. So since $\sigma\ge \tau$ and $v \in \mathcal{D}(M^{\sigma})$,
\begin{align}
\|\left(P_{n, \tau}- I\right) v\|_{\tau}^2 & \le  c
\, \sum_{k=n+1}^{\infty} |\left( M^{\tau} v, q_k \right)_0|^2 \notag \\
& =   c \, \sum_{k=n+1}^{\infty} |\left( M^{\sigma}
v, M^{\tau-\sigma} q_k \right)_0|^2 \notag \\
& =   c \, \sum_{k=n+1}^{\infty} \mu_k^{2 \tau -2
\sigma} \, |\left( M^{\sigma} v, q_k \right)_0|^2 \notag
\\
& \le  c \, \mu_{n+1}^{2 \tau -2 \sigma} \,
\sum_{k=n+1}^{\infty} |\left( M^{\sigma} v, q_k \right)_0|^2 \notag \\
& \le c \, \mu_{n+1}^{2 \tau -2 \sigma} \, |M^{\sigma} v|_0^2 \, \, o(1) \text{ as } n \to
\infty. \notag
\end{align}
In brief,
\[\|(P_{n,\tau} -I) v\|_{\tau}^2 = o\left(\mu_{n+1}^{2\tau-2 \sigma}\right)
\text{ as } n\to \infty.\]
Since the norm in $\mathcal{H}_{\tau}$ is stronger than that in
$\mathcal{H}_{\tau-1/2}$ the proposition follows from Theorem \ref{theo3.1}
\hfill \end{proof}

Note that we have effectively used powers of $M$ to perform ``a continuous abstract integration by parts''. Proposition \ref{prop4.2} provides potentially useful information for $\tau=0$ or $\tau=1/2$. But for $0<\tau<1/2$, the Rayleigh quotient is usually not computable since typically only complicated expressions for equivalent norms are known, and that at best.

We now give the analogue of Proposition \ref{prop4.2} for the harmonic Ritz method, i.e., the Rayleigh quotient with $\tau=1$. $\mathcal{G}_0$ and $M$ are as previously defined.

\begin{proposition} \label{prop4.4}
Assume that
\begin{equation} \label{4.4}
\mathcal{D}\left(M\right) \subset_{c} \mathcal{D}\left(A\right)
\end{equation}
and let $0 \le \tau \le 1$. Then if $\sigma \ge \tau$ and the eigenvectors $u_1, u_2, \ldots, u_{\ell}$ of $A$ are all in $\mathcal{D}\left(M^{\sigma} \right)$,
\[\Lambda_{j}^{(n, \tau)} = \lambda_j + o \left(\mu_{n+1}^{2 \tau - 2 \sigma}\right), 1 \le j \le \ell, \]
and
\[\|u_{j,n} - u_j^{(n,\tau)}\|_{\tau}= o\left(\mu_{n+1}^{\tau-
\sigma}\right), \quad 1\le j \le \ell,\]
both as $n\to \infty$.
\end{proposition}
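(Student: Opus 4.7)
The plan is to mirror the proof of Proposition \ref{prop4.2} step for step, with one crucial upgrade: the stronger hypothesis \eqref{4.4} extends the admissible range of the interpolation estimate from $[0,1/2]$ all the way up to $[0,1]$, which is precisely what is needed to cover the harmonic Ritz quotient ($\tau=1$) and the intermediate values. Once the extended interpolation inequality is in hand, the projection error is bounded by an abstract Fourier tail in the eigenbasis of $M$, and Theorem \ref{theo3.1} delivers the claimed rates.

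First I would note that \eqref{4.4} implies $\{q_k\}$ is complete in $\mathcal{D}(A)$, hence in $\mathcal{H}_\tau = \mathcal{D}(A^\tau)$ for every $\tau \in [0,1]$. Then I would apply Proposition \ref{prop4.1} with $\mathcal{V}_0 = \mathcal{G}_0$, $\mathcal{V}_1=\mathcal{D}(M)$, $\mathcal{W}_0 = \mathcal{H}_0$, $\mathcal{W}_1=\mathcal{D}(A)$, and $C$ the identity map; continuity at the endpoints is immediate (norms of $\mathcal{G}_0$ and $\mathcal{H}_0$ are equivalent, and \eqref{4.4} handles level $1$), so interpolation yields
\[
\|A^\tau v\|_0 \le c\,|M^\tau v|_0, \qquad v \in \mathcal{D}(M^\tau),\ 0 \le \tau \le 1.
\]
This is the key improvement on Proposition \ref{prop4.2}: the bound is now valid for all $\tau \in [0,1]$, not just $\tau \le 1/2$.

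Next I would reproduce the chain of inequalities from Proposition \ref{prop4.2}. Using the best-approximation property of the $\mathcal{H}_\tau$-orthogonal projection $P_{n,\tau}$ onto $\operatorname{span}\{q_1,\ldots,q_n\}$ and comparing with $R_n v$ in the same span,
\[
\|(P_{n,\tau}-I)v\|_\tau \le \|(R_n - I)v\|_\tau = \|A^\tau (R_n-I)v\|_0 \le c\,|M^\tau(R_n-I)v|_0.
\]
Since $R_n$ commutes with $M^\tau$ (spectral projection), and since for $v \in \mathcal{D}(M^\sigma)$ with $\sigma \ge \tau$ we may write $|(M^\tau v, q_k)_0|^2 = \mu_k^{2\tau - 2\sigma}\,|(M^\sigma v, q_k)_0|^2$, expanding in the orthonormal basis $\{q_k\}$ and using monotonicity of $\mu_k^{2\tau-2\sigma}$ in $k$ gives
\[
\|(P_{n,\tau}-I)v\|_\tau^2 \le c\,\mu_{n+1}^{2\tau-2\sigma}\sum_{k=n+1}^{\infty} |(M^\sigma v,q_k)_0|^2 = \mu_{n+1}^{2\tau-2\sigma}\, o(1)
\]
as $n \to \infty$ by Parseval.

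Finally, I would apply this bound with $v = u_i$, $1 \le i \le \ell$, so that $Q_\ell v \in \mathcal{D}(M^\sigma)$, and substitute into Theorem \ref{theo3.1}. Since $\|\cdot\|_{\tau-1/2} \le c\|\cdot\|_\tau$ in all cases (the $\tau-1/2$ topology is weaker whether $\tau-1/2$ is positive, zero, or negative), both projection terms $\|(I-P_{n,\tau})Q_\ell\|_{\tau-1/2}$ and $\|(I-P_{n,\tau})Q_\ell\|_\tau$ are $o(\mu_{n+1}^{\tau-\sigma})$, yielding the eigenvalue rate $o(\mu_{n+1}^{2\tau-2\sigma})$ and the eigenvector rate $o(\mu_{n+1}^{\tau-\sigma})$. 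I do not foresee any serious obstacle: the argument is a clean transcription of the proof of Proposition \ref{prop4.2}, and the only real content is the verification that Proposition \ref{prop4.1} can be applied at level $\tau=1$ under hypothesis \eqref{4.4}.
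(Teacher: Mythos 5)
Your proposal is correct and follows exactly the route the paper intends: the paper's proof of Proposition \ref{prop4.4} is simply the proof of Proposition \ref{prop4.2} with the interpolation inequality $\|A^{\tau}v\|_0 \le c\,|M^{\tau}v|_0$ now valid for $0\le\tau\le 1$ thanks to hypothesis \eqref{4.4}, followed by the same tail estimate and an appeal to Theorem \ref{theo3.1}. Your verification that Proposition \ref{prop4.1} applies at level $\tau=1$ is precisely the "only real content," as you note.
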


The proof directly mimics that of Proposition \ref{prop4.2}. As there, potentially useful information has been obtained for $\tau=0, 1/2$, and 1, and for $\tau=1$, this is applicable to the nonlinear string problem. But in that problem, the critical value of $\tau$ to treat the expansion of the nonlinearity is $\tau=3/4$. The following theorem enables us to apply regular Rayleigh-Ritz, i.e., $\tau=1/2$, with $\tau$ large enough to treat the nonlinearity. This requires stronger hypotheses that are only satisfied when $A$ has compact inverse. An example in another context \cite{Beattie-Greenlee}
applies hypotheses like those of the preceding two propositions to $A$ without compact inverse.

\begin{theorem} \label{theo4.3}
Let $\tau>1/2$, assume that $\mathcal{D}(M^{\tau}) =\mathcal{D}(A^{\tau})$,
with equivalent norms, and further assume that
$\mathcal{D}(M^{2\tau-1/2}) \subset_{c} \mathcal{D}(A^{2\tau-1/2})$. Then if for each $j=1, \ldots, \ell$, both
\begin{equation} \label{new4.2}
\frac{\sum_{k=n+1}^{\infty}  \, \mu_{k}^{4 \tau-1} \, |(u_j, q_k)_0|^2}
{\sum_{k=n+1}^{\infty}  \, \mu_{k}^{2 \tau} \, |(u_j, q_k)_0|^2}=O\left(\mu_{n+1}^{2 \tau-1}\right) \text{ as } n\to \infty,
\end{equation}
and $u_j \in \mathcal{D}(M^{\sigma})$ with $\sigma\ge 2 \tau-1/2$, one has
\begin{equation} \notag
\|u_j-u_j^{(n,1/2)}\|_{\tau}= o\left(\mu_{n+1}^{\tau-\sigma} \right) \text{ as } n\to \infty, j=1, \dots, \ell.\end{equation}
\end{theorem}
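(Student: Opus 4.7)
The plan is to prove the $\|\cdot\|_\tau$ bound by sandwiching between the natural energy-norm bound $\|u_j-u_j^{(n,1/2)}\|_{1/2}$ (available for regular Rayleigh-Ritz) and a sharper $\|u_j-u_j^{(n,1/2)}\|_{2\tau-1/2}$ bound, then invoking the interpolation inequality $\|v\|_\tau \leq \|v\|_{1/2}^{1/2}\|v\|_{2\tau-1/2}^{1/2}$, which follows from Cauchy--Schwarz applied to the spectral decomposition of $A$ since $\tau$ is the midpoint of $1/2$ and $2\tau-1/2$. The $\|\cdot\|_{1/2}$ bound is just Proposition \ref{prop4.2} at method index $1/2$: to invoke it I would first derive $\mathcal{D}(M^{1/2}) \subset_{c} \mathcal{D}(A^{1/2})$ from the hypotheses $\mathcal{D}(M^\tau)=\mathcal{D}(A^\tau)$ and the equivalence of norms on $\mathcal{G}_0 \equiv \mathcal{H}_0$ via Proposition \ref{prop4.1} (with parameter $s=1/(2\tau)$), noting that $\sigma \geq 2\tau-1/2 > 1/2$ places us in the scope of that proposition and yields $\|u_j-u_j^{(n,1/2)}\|_{1/2}=o(\mu_{n+1}^{1/2-\sigma})$.

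The core step is the $\|\cdot\|_{2\tau-1/2}$ bound. I would split $u_j-u_j^{(n,1/2)} = (I-R_n)u_j + R_n(u_j-u_j^{(n,1/2)})$, using that the Rayleigh-Ritz eigenvector lies in the span (so $R_n u_j^{(n,1/2)}=u_j^{(n,1/2)}$) and that $R_n$ commutes with every power of $M$, and pass to $|M^{2\tau-1/2}\cdot|_0$ via $\mathcal{D}(M^{2\tau-1/2}) \subset_{c} \mathcal{D}(A^{2\tau-1/2})$. The tail is exactly where hypothesis \eqref{new4.2} enters:
\[|M^{2\tau-1/2}(I-R_n)u_j|_0^2 = \sum_{k>n}\mu_k^{4\tau-1}|(u_j,q_k)_0|^2 \leq C\mu_{n+1}^{2\tau-1}\sum_{k>n}\mu_k^{2\tau}|(u_j,q_k)_0|^2,\]
and the latter sum is $o(\mu_{n+1}^{2\tau-2\sigma})$ by the summability of $\sum_k\mu_k^{2\sigma}|(u_j,q_k)_0|^2$, giving $o(\mu_{n+1}^{2\tau-1/2-\sigma})$ for the tail. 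For the span piece I would apply the elementary inverse inequality $|M^{2\tau-1/2}v|_0 \leq \mu_n^{2\tau-1}|M^{1/2}v|_0$ (valid for $v$ in the first $n$ eigenspaces of $M$ because $\tau > 1/2$), the contraction $|M^{1/2}R_n(\cdot)|_0 \leq |M^{1/2}(\cdot)|_0$, and the interpolated equivalence $\mathcal{D}(M^{1/2})=\mathcal{D}(A^{1/2})$ to replace $|M^{1/2}(u_j-u_j^{(n,1/2)})|_0$ by $C\|u_j-u_j^{(n,1/2)}\|_{1/2}$; inserting the rate from Proposition \ref{prop4.2} then produces $\mu_{n+1}^{2\tau-1}\cdot o(\mu_{n+1}^{1/2-\sigma}) = o(\mu_{n+1}^{2\tau-1/2-\sigma})$ for the span piece as well. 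Interpolating the $1/2$ and $2\tau-1/2$ bounds delivers the claimed $o(\mu_{n+1}^{\tau-\sigma})$.

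The main obstacle is the transfer from the natural energy norm of regular Rayleigh-Ritz to the stronger $\|\cdot\|_\tau$: since $u_j^{(n,1/2)}$ is not the $\mathcal{H}_\tau$-orthogonal projection of $u_j$, no best-approximation shortcut is available, and one must pay an inverse-inequality factor of order $\mu_n^{2\tau-1}$ on the span piece. Hypothesis \eqref{new4.2} is precisely what allows the tail piece to match that factor, functioning as an inverse-type estimate on $(I-R_n)u_j$ that parallels the automatic inverse inequality available on the span, so that the two contributions balance at rate $o(\mu_{n+1}^{2\tau-1/2-\sigma})$ and interpolation with the $\|\cdot\|_{1/2}$ bound closes the argument.
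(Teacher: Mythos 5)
Your argument is correct, and it reaches the paper's conclusion through a recognizably related but differently organized route. The paper introduces the auxiliary vector $w_{j,n}=A^{2\tau-1}\bigl(u_j-u_j^{(n,1/2)}\bigr)$, writes $\|e\|_{\tau}^2=a(e,w_{j,n})\le\|e\|_{1/2}\,\|e\|_{2\tau-1/2}$ (the same midpoint Cauchy--Schwarz fact you invoke as the interpolation inequality), and then bounds the \emph{relative} quantity $\|e\|_{2\tau-1/2}/\|e\|_{\tau}$ by estimating
$\sup_{v\in \mathrm{span}\{u_j,q_1,\dots,q_n\}}|M^{2\tau-1/2}v|_0^2/|M^{\tau}v|_0^2$
via the Pythagorean splitting $v=R_nv+(I-R_n)v$: the span part gives $\mu_n^{2\tau-1}$ by the variational characterization, and the tail part is \emph{exactly} the ratio in \eqref{new4.2}. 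You instead bound the \emph{absolute} quantity $\|e\|_{2\tau-1/2}$ directly, splitting $e=(I-R_n)u_j+R_n e$, using only the numerator-domination form of \eqref{new4.2} together with the decay $\sum_{k>n}\mu_k^{2\tau}|(u_j,q_k)_0|^2=o(\mu_{n+1}^{2\tau-2\sigma})$ for the tail, and the elementary inverse inequality plus the Proposition~\ref{prop4.2} energy rate for the span piece; interpolation then closes the argument. The ingredients (hypothesis \eqref{new4.2}, the spectral projection $R_n$, the $\mu_n^{2\tau-1}$ inverse factor, the energy-norm rate, and the midpoint inequality) coincide, so the proofs are cousins, but your version buys an explicit intermediate estimate $\|e\|_{2\tau-1/2}=o(\mu_{n+1}^{2\tau-1/2-\sigma})$ and dispenses with the duality device and the subspace supremum, while the paper's ratio formulation exhibits \eqref{new4.2} as precisely the tail Rayleigh quotient over $\mathrm{span}\{u_j,q_1,\dots,q_n\}$ (making clear it is the minimal hypothesis) and transfers verbatim to the dual harmonic Ritz analogue (Theorem 4.4). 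Two further points in your favor: you correctly use the hypothesis $\mathcal{D}(M^{2\tau-1/2})\subset_c\mathcal{D}(A^{2\tau-1/2})$ in the direction $\|v\|_{2\tau-1/2}\le c\,|M^{2\tau-1/2}v|_0$ (the paper's proof contains a reversed inclusion at this spot, evidently a slip), and you make explicit the derivation of \eqref{4.1}, i.e.\ $\mathcal{D}(M^{1/2})\subset_c\mathcal{D}(A^{1/2})$, from $\mathcal{D}(M^{\tau})=\mathcal{D}(A^{\tau})$ via Proposition~\ref{prop4.1}, a step the paper leaves implicit when it invokes Proposition~\ref{prop4.2}. (As in the paper, for multiple eigenvalues one should read $u_j$ as the $n$-dependent representative $u_{j,n}$ supplied by Theorem~\ref{theo3.1}.)
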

\begin{proof}
Let $u_{j}^{(n)}= u_{j}^{(n,1/2)}$ and define $w_{j,n}$ by
\[A^{1-2 \tau} w_{j,n} = u_j - u_{j}^{(n)},\]
i.e.,
\[w_{j,n} = A^{2 \tau-1} \left(u_j - u_{j}^{(n)}\right).\]
Our hypotheses guarantee that $w_{j,n}$ is well defined, and that the following manipulations are valid. Then
\begin{align}
\|u_j-u_j^{(n)}\|_{\tau}^2 &= \langle u_j-u_j^{(n)} , A^{1-2 \tau} \, w_{j,n}\rangle_{\tau} \notag \\
&= \langle A^{\tau} (u_j-u_j^{(n)}) , A^{1-\tau} \, w_{j,n}\rangle_{0} \notag \\
&= \langle A^{1/2} (u_j-u_j^{(n)}) , A^{1/2} \, w_{j,n}\rangle_{0}\notag \\
&=a(u_j-u_j^{(n)}, w_{j,n}).\notag
\end{align}
The Rayleigh-Ritz method is such that if $w_{j,n}=0$ for some $n$, it vanishes for all greater $n$, and there is nothing to prove. Otherwise, by the definition of $w_{j,n}$,
\begin{equation} \notag
\|u_j-u_j^{(n)}\|_{\tau} \le \frac{\|u_j-u_j^{(n)}\|_{1/2}\, \|\, w_{j,n}\|_{1/2}}
{\|A^{1-2 \tau} \, w_{j,n}\|_{\tau}}.
\end{equation}
Noting that
\begin{equation} \notag
\frac{\|w_{j,n}\|_{1/2}}
{\|A^{1-2 \tau} \, w_{j,n}\|_{\tau}}= \frac{\|A^{1-2\tau} w_{j,n}\|_{2\tau-1/2}}
{\|A^{1-2 \tau} \, w_{j,n}\|_{\tau}},
\end{equation}
it remains to prove that the latter ratio is $O(\mu_{n+1}^{\tau-1/2})$, as $n\to \infty$, since by Proposition \ref{prop4.2}, $\|u_j-u_j^{(n)}\|_{1/2}=o(\mu_{n+1}^{1/2-\sigma})$ as $n\to \infty$.
For this purpose observe that
\begin{equation} \notag
\frac{\|A^{1-2\tau} w_{j,n}\|_{2\tau-1/2}}
{\|A^{1-2 \tau} \, w_{j,n}\|_{\tau}}=\frac{\|u_j-u_j^{(n)}\|_{2\tau-1/2}}{
\|u_j-u_j^{(n)}\|_{\tau}} \le c \,
\frac{|M^{2 \tau -1/2} (u_j-u_j^{(n)})|_{0}}{
|M^{\tau}(u_j-u_j^{(n)}|_{0}},
\end{equation}
since $\mathcal{D}(A^{\tau})=\mathcal{D}(M^{\tau})$ and $\mathcal{D}(A^{2\tau-1/2})\subset_{c} \mathcal{D}(M^{2\tau-1/2})$. Thus it suffices to show that
\begin{align}
\sup_{v \in span\{u_j, q_1, \ldots, q_{n}\}} \, \frac{|M^{2 \tau-1/2} \, v|_{0}^2}
{|M^{\tau} v|_{0}^2} &=\sup_{x \in span\{M^{\tau} u_j, q_1, \ldots, q_{n}\}} \, \frac{|M^{\tau-1/2} \, x|_{0}^2}
{|x|_{0}^2}\notag \\
&=O(\mu^{2 \tau-1}) \text{ as } n\to \infty, \notag
\end{align}
where we have used the fact that the $q_k$'s are eigenvectors of $M$, and hence of $M^{\tau}$. Then if $R_n$ is the spectral projection for $M$ onto $span\{q_1, \ldots, q_n \}$,
\begin{align}
\sup_{x \in span\{M^{\tau} u_j, q_1, \ldots, q_{n}\}} & \, \frac{|M^{\tau-1/2} \, x|_{0}^2}
{|x|_{0}^2}\notag \\
&=\sup_{x \in span\{(I-R_n) M^{\tau} u_j, q_1, \ldots, q_{n}\}} \, \frac{|M^{\tau-1/2} \, x|_{0}^2}
{|x|_{0}^2}\notag \\
&= \sup_{x \in span\{(I-R_n) M^{\tau} u_j, q_1, \ldots, q_{n}\}} \, \left( \frac{|M^{\tau-1/2} \, R_n \, x|_{0}^2}
{|x|_{0}^2}
+\frac{|M^{\tau-1/2} \, (I-R_n) \, x|_{0}^2}
{|x|_{0}^2}
\right),
\notag
\end{align}
by the Pythagorean theorem. The latter is dominated by
\[\sup_{x \in span\{(I-R_n) M^{\tau} u_j, q_1, \ldots, q_{n}\}} \, \frac{|M^{\tau-1/2} \, R_n \, x|_{0}^2}
{|x|_{0}^2}
\, + \, \sup_{x \in span\{(I-R_n) M^{\tau} u_j, q_1, \ldots, q_{n}\}} \, \frac{|M^{\tau-1/2} \, (I-R_n) \, x|_{0}^2}
{|x|_{0}^2}
\]
which is equal to
\[\sup_{x \in span\{q_1, \ldots, q_{n}\}} \, \frac{|M^{\tau-1/2} \, R_n \, x|_{0}^2}
{|x|_{0}^2}
\, + \, \frac{|M^{\tau-1/2} \, (I-R_n) \, M^{\tau} u_j|_{0}^2}
{|(I-R_n) M^{\tau} u_j|_{0}^2}
\]
since $R_n$ and $I-R_n$ are complementary spectral projections for $M$, and therefore for the powers of $M$. Since $\tau-1/2>0$ the remaining $\sup$ is, by the variational characterization of eigenvalues of $M^{\tau-1/2}$, exactly
\[\frac{|M^{\tau-1/2} \, q_n|_0^2}{|q_n|^2}=\mu_{n}^{2 \tau-1}.\]
The latter ratio is
\begin{equation} \notag
\frac{\sum_{k=n+1}^{\infty}  \, \mu_{k}^{4 \tau-1} \, |(u_j, q_k)_0|^2}
{\sum_{k=n+1}^{\infty}  \, \mu_{k}^{2 \tau} \, |(u_j, q_k)_0|^2},
\end{equation}
which by hypothesis is $O(\mu_{n+1}^{2 \tau-1})$. Since $\mu_{n+1} \ge \mu_{n}$ and $2 \tau-1>0$, this concludes the proof.
\hfill \end{proof}

Note that use of regular Rayleigh-Ritz, rather than ``$\tau$-Ritz'' with $\tau>1/2$, does not change the ``optimal'' rate of eigenvector convergence in the $\tau$-norm as given in Proposition \ref{prop4.4}. Since, when applicable, dual harmonic Ritz gives faster eigenvalue rates of convergence for a given set of trial vectors we now present the corresponding theorem for this case.

\begin{theorem} \label{newtheo4.4}
Let $\tau>0$, assume that $\mathcal{D}(M^{\tau}) =\mathcal{D}(A^{\tau})$,
with equivalent norms, and further assume that
$\mathcal{D}(M^{2\tau}) \subset_{c} \mathcal{D}(A^{2\tau})$. Then if for each $j=1, \ldots, \ell$, both
\begin{equation} \label{new4.4}
\frac{\sum_{k=n+1}^{\infty}  \, \mu_{k}^{4 \tau} \, |(u_j, q_k)_0|^2}
{\sum_{k=n+1}^{\infty}  \, \mu_k^{2 \tau} \, |(u_j, q_k)_0|^2}=O\left(\mu_{n+1}^{2 \tau}\right) \text{ as } n\to \infty,
\end{equation}
and $u_j \in \mathcal{D}(M^{\sigma})$ with $\sigma\ge \tau$, one has
\[\|u_j-u_j^{(n,0)}\|_{\tau} = o\left(\mu_{n+1}^{\tau-\sigma} \right) \text{ as } n\to \infty, j=1, \dots, \ell.\]
\end{theorem}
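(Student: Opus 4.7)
The plan is to mimic the proof of Theorem \ref{theo4.3} line for line, with regular ($\tau=1/2$) Ritz replaced by $\tau=0$-Ritz and the interpolation triple $(1/2,\tau,2\tau-1/2)$ shifted down to $(0,\tau,2\tau)$. Write $u_j^{(n)}=u_j^{(n,0)}$ and set $w_{j,n}=A^{2\tau}(u_j-u_j^{(n)})$; the hypotheses are arranged so that this is well defined and the spectral calculus manipulations below make sense (with the trivial case $u_j=u_j^{(n)}$ handled as in Theorem \ref{theo4.3}). From the spectral theorem for $A$ I would obtain
\[
\|u_j-u_j^{(n)}\|_\tau^2 \;=\; \langle u_j-u_j^{(n)},\,w_{j,n}\rangle_0,
\]
and Cauchy--Schwarz in $\mathcal{H}_0$ together with $\|w_{j,n}\|_0=\|u_j-u_j^{(n)}\|_{2\tau}$ yields the interpolation inequality
\[
\|u_j-u_j^{(n)}\|_\tau^2 \;\le\; \|u_j-u_j^{(n)}\|_0\;\|u_j-u_j^{(n)}\|_{2\tau},
\]
with $\tau$ the midpoint of $0$ and $2\tau$. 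This is the $\tau=0$-analog of the inequality $\|\cdot\|_\tau^2\le\|\cdot\|_{1/2}\|\cdot\|_{2\tau-1/2}$ used in Theorem \ref{theo4.3}.

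Next, Proposition \ref{prop4.4} at $\tau=0$ (applicable since $\sigma\ge\tau\ge 0$ and $u_j\in\mathcal{D}(M^\sigma)$) supplies the outer factor $\|u_j-u_j^{(n)}\|_0=o(\mu_{n+1}^{-\sigma})$. The remaining task, which is the heart of the argument, is to bound the stronger-norm factor against the target norm:
\[
\frac{\|u_j-u_j^{(n)}\|_{2\tau}}{\|u_j-u_j^{(n)}\|_\tau}\;=\;O\!\left(\mu_{n+1}^{\tau}\right)\qquad\text{as }n\to\infty.
\]
Using $\mathcal{D}(M^\tau)=\mathcal{D}(A^\tau)$ with equivalent norms and $\mathcal{D}(M^{2\tau})\subset_c\mathcal{D}(A^{2\tau})$, I would first pass to the $M$-side, dominating this ratio up to a constant by $|M^{2\tau}(u_j-u_j^{(n)})|_0/|M^\tau(u_j-u_j^{(n)})|_0$. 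Since $u_j-u_j^{(n)}\in\mathrm{span}\{u_j,q_1,\dots,q_n\}$, this is controlled by
\[
\sup_{v\in\mathrm{span}\{u_j,q_1,\dots,q_n\}}\frac{|M^{2\tau}v|_0^2}{|M^\tau v|_0^2}\;=\;\sup_{x\in\mathrm{span}\{M^\tau u_j,\,q_1,\dots,q_n\}}\frac{|M^\tau x|_0^2}{|x|_0^2},
\]
the substitution $x=M^\tau v$ being legitimate because the $q_k$ are eigenvectors of $M^\tau$.

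The main technical step is then the same Pythagorean decomposition used in Theorem \ref{theo4.3}: splitting $x=R_nx+(I-R_n)x$ in $\mathcal{G}_0$, noting that $(I-R_n)x$ is a scalar multiple of $(I-R_n)M^\tau u_j$, and using that $M^\tau$ commutes with $R_n$, one dominates the sup by the sum of two suprema. The first, restricted to $\mathrm{span}\{q_1,\dots,q_n\}$, equals $\mu_n^{2\tau}$ by the variational characterization of eigenvalues of $M^\tau$, and the second collapses to the tail quotient
\[
\frac{|M^\tau(I-R_n)M^\tau u_j|_0^2}{|(I-R_n)M^\tau u_j|_0^2}\;=\;\frac{\sum_{k=n+1}^\infty \mu_k^{4\tau}\,|(u_j,q_k)_0|^2}{\sum_{k=n+1}^\infty \mu_k^{2\tau}\,|(u_j,q_k)_0|^2},
\]
which is $O(\mu_{n+1}^{2\tau})$ by hypothesis \eqref{new4.4}. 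Since $\mu_n\le\mu_{n+1}$ and $2\tau>0$, both pieces are $O(\mu_{n+1}^{2\tau})$, giving the desired ratio bound.

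Combining, I would arrive at $\|u_j-u_j^{(n)}\|_\tau^2\le C\,\mu_{n+1}^{\tau}\,\|u_j-u_j^{(n)}\|_0\,\|u_j-u_j^{(n)}\|_\tau$; dividing by $\|u_j-u_j^{(n)}\|_\tau$ and inserting the Proposition \ref{prop4.4} estimate for $\|u_j-u_j^{(n)}\|_0$ yields $\|u_j-u_j^{(n)}\|_\tau=o(\mu_{n+1}^{\tau-\sigma})$, as required. The only delicate point is the bookkeeping of the Pythagorean splitting (ensuring the two separate suprema genuinely dominate the joint one and that the tail quotient is the correct one), and that the hypothesis $\sigma\ge\tau$ — weaker than the $\sigma\ge 2\tau-1/2$ appearing in Theorem \ref{theo4.3} — still yields the same rate $\mu_{n+1}^{\tau-\sigma}$; this is consistent with dual harmonic Ritz giving faster eigenvalue (and here comparable eigenvector) convergence than regular Rayleigh--Ritz.
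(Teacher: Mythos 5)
Your proposal is correct and follows essentially the same route the paper intends: the paper's own proof of Theorem \ref{newtheo4.4} is exactly the instruction to mimic Theorem \ref{theo4.3} with $w_{j,n}$ defined by $A^{-2\tau}w_{j,n}=u_j-u_j^{(n,0)}$, and your steps — the identity $\|u_j-u_j^{(n)}\|_\tau^2=\langle u_j-u_j^{(n)},w_{j,n}\rangle_0$, the resulting inequality $\|e\|_\tau^2\le\|e\|_0\,\|e\|_{2\tau}$, the passage to $M$-norms via the two domain hypotheses, the Pythagorean splitting over $span\{M^\tau u_j,q_1,\ldots,q_n\}$ with the first supremum equal to $\mu_n^{2\tau}$ and the tail quotient equal to the ratio in \eqref{new4.4}, and the final combination with the base-norm estimate — are precisely the intended adaptation. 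The only cosmetic point is the citation for the outer factor: Proposition \ref{prop4.4} at $\tau=0$ formally assumes $\mathcal{D}(M)\subset_{c}\mathcal{D}(A)$, which Theorem \ref{newtheo4.4} does not hypothesize, but at the base level that proof uses only the equivalence of the $\mathcal{G}_0$ and $\mathcal{H}_0$ norms (and completeness of the $q_k$), so $\|u_j-u_j^{(n,0)}\|_0=o(\mu_{n+1}^{-\sigma})$ holds as you claim.
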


The proof of Theorem \ref{newtheo4.4} mimics that of Theorem \ref{theo4.3} starting by defining $w_{j,n}$ as $A^{-2 \tau} \, w_{j,n}= u_j-u_j^{(n,0)}$.
The corresponding theorem using a harmonic Ritz calculation with $\tau>1$ would entail defining $w_{j,n}$ by $A^{2-2 \tau} \, w_{j,n}= u_j-u_j^{(n,1)}$. We will not pursue this.

We now examine convergence of Rayleigh-Ritz eigenvectors in the $\mathcal{H}_0$ norm, weaker than the energy norm. The following is an abstraction of a duality argument from finite element theory as presented in \cite{Brenner-Scott}.

\begin{theorem} \label{newtheo4.5}
Assume that $\mathcal{D}(A)=\mathcal{D}(M)$ with equivalent norms $|M v|_0$ and $\|A v\|_0$. Then if $\sigma \ge 1/2$ and $v \in \mathcal{D}(M^{\sigma})$,
\begin{equation} \notag
\|v- P_{n,1/2} v\|_0 =o(\mu_{n+1}^{-\sigma}) \text{ as } n\to \infty.
\end{equation}
Moreover, the conclusion of Proposition \ref{prop4.2} can be supplemented by
\begin{equation} \notag
\|u_j- u_j^{(n,1/2)}\|_0 =o(\mu_{n+1}^{-\sigma}), 1\le j \le \ell, \text{ as } n\to \infty,
\end{equation}
provided that the first $\ell$ eigenvectors, $u_1, \ldots, u_{\ell}$ are in $\mathcal{D}(M^{\sigma})$.
\end{theorem}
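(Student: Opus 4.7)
The plan is an Aubin--Nitsche style duality argument adapted to the present abstract Rayleigh--Ritz setting. Put $w := v - P_{n,1/2} v$ and represent the $\mathcal{H}_0$-norm by duality:
\[
\|w\|_0 = \sup_{\phi \in \mathcal{H}_0,\ \|\phi\|_0 = 1}|\langle w,\phi\rangle_0|.
\]
For each such $\phi$ introduce the dual solution $\psi := A^{-1}\phi \in \mathcal{D}(A)$. Because $\mathcal{D}(A)=\mathcal{D}(M)$ with equivalent norms by hypothesis, $\psi \in \mathcal{D}(M)$ and $|M\psi|_0 \le c\,\|A\psi\|_0 = c\,\|\phi\|_0$; by Proposition \ref{prop4.1} this identification propagates to the half-scale, so $\mathcal{D}(A^{1/2}) = \mathcal{D}(M^{1/2})$ and the completeness assumption \eqref{4.1} of Proposition \ref{prop4.2} is automatic.

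Next I would exploit Galerkin orthogonality. Since $P_{n,1/2}$ is the orthogonal projection in $\mathcal{H}_{1/2}$, one has $a(w,q)=0$ for every $q \in \mathrm{span}\{q_1,\dots,q_n\}$, so
\[
\langle w,\phi\rangle_0 = \langle w, A\psi\rangle_0 = a(w,\psi) = a\bigl(w,\,\psi - P_{n,1/2}\psi\bigr),
\]
and Cauchy--Schwarz in $\mathcal{H}_{1/2}$ gives $|\langle w,\phi\rangle_0| \le \|w\|_{1/2}\,\|\psi-P_{n,1/2}\psi\|_{1/2}$. I would then apply the tail bound from the proof of Proposition \ref{prop4.2} to each factor separately: with $v\in\mathcal{D}(M^\sigma)$, $\sigma\ge 1/2$, it yields $\|w\|_{1/2} = o(\mu_{n+1}^{1/2-\sigma})$; with $\psi\in\mathcal{D}(M)$ and exponent $1$ it yields $\|\psi-P_{n,1/2}\psi\|_{1/2}\le c\,\mu_{n+1}^{-1/2}\,|M\psi|_0 \le c\,\mu_{n+1}^{-1/2}\,\|\phi\|_0$. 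Multiplying and taking the supremum over unit $\phi$ produces $\|w\|_0 = o(\mu_{n+1}^{-\sigma})$, which is the first assertion.

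The eigenvector conclusion then follows by combining the first part with Theorem \ref{theo3.1}. At $\tau = 1/2$ that theorem supplies
\[
\|u_{j,n} - u_j^{(n,1/2)}\|_0 \le 2\Bigl(1 + \frac{\lambda_\ell}{\kappa\,\delta}\Bigr)\,\|(I - P_{n,1/2})\,Q\|_0,
\]
where $Q$ is the spectral projection of $A$ onto the $\lambda_j$-eigenspace. Since $Q$ has finite-dimensional range spanned by eigenvectors lying in $\mathcal{D}(M^\sigma)$, applying the first part of the present theorem to each such basis vector and passing to the operator-norm supremum gives $\|(I - P_{n,1/2})\,Q\|_0 = o(\mu_{n+1}^{-\sigma})$, and the stated bound follows. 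I expect the only subtle point to be the twofold use of the hypothesis $\mathcal{D}(A)=\mathcal{D}(M)$: it supplies the $M$-regularity of $\psi$ needed to invoke Proposition \ref{prop4.2} at level $\sigma=1$ for the dual solution, and it converts $|M\psi|_0$ back into the test-function norm $\|\phi\|_0$ at the end of the duality step. Without this identification the boost from $o(\mu_{n+1}^{1/2-\sigma})$ in the energy norm to $o(\mu_{n+1}^{-\sigma})$ in $\mathcal{H}_0$ could not be extracted.
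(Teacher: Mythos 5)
Your argument is correct and is essentially the paper's own proof: the paper also uses the Aubin--Nitsche duality trick, solving $A w_n = v - P_{n,1/2}v$ (rather than taking a supremum over unit $\phi$, which is an equivalent formulation), exploiting Galerkin orthogonality of $P_{n,1/2}$ in the energy inner product, applying the tail estimate from the proof of Proposition \ref{prop4.2} at level $\sigma$ for $v$ and level $1$ for the dual solution, and using $\mathcal{D}(A)=\mathcal{D}(M)$ to control the dual solution's $M$-norm. The eigenvector statement is then drawn from Theorem \ref{theo3.1} exactly as you indicate.
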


\begin{proof}
For simplicity of notation denote $P_{n, 1/2}$ by $P_n$, and for $v \in \mathcal{D}(M^{\sigma})$ let $w_n$ be the unique solution of \begin{equation} \notag
A \, w_n = v- P_n v.
\end{equation}
If $w_n=0$ there is nothing to prove. Otherwise,
\begin{align}
\|v- P_n v\|_0^2 &=\langle v - P_n v, A \, w_n\rangle_0 \notag \\
&= a(v - P_n v, w_n) \notag \\
&=a(v- P_n v, w_n - P_n w_n). \notag
\end{align}
So by the definition of $w_n$,
\begin{align} \label{new4.5}
\|v- P_n v\|_0 &\le \frac{\|v-P_n v\|_{1/2} \, \|w_n - P_n w_n\|_{1/2}}
{\|A w_n\|_0} \notag \\
&\le c \, \frac{\|v-P_n v\|_{1/2} \, \|w_n - P_n w_n\|_{1/2}}
{|M w_n|_0}.
\end{align}
Since $w_n \in \mathcal{D}(A)=\mathcal{D}(M)$, it follows as in the proof of Proposition \ref{prop4.2} that
\begin{equation} \notag
\|w_n - P_n w_n\|_{1/2}^2 \le c \, \mu_{n+1}^{-1} |M w_n |_0^2,
\end{equation}
and since $v \in \mathcal{D}(M^{\sigma})$ with $\sigma \ge 1/2$, it follows similarly that
\begin{equation} \notag
\|v - P_n v\|_{1/2}^2 \le c \, \mu_{n+1}^{1-2 \sigma} \, |M^{\sigma} v|_0^2 \, o(1) \text{ as } n\to \infty.
\end{equation}
So \eqref{new4.5} yields
\begin{equation} \notag
\|v- P_n v\|_0 \le c\, \mu_{n+1}^{-1} \, \mu_{n+1}^{1- 2 \sigma} \,
|M^{\sigma} v|_0^2 \, o(1) = o(\mu_{n+1}^{-2 \sigma}) \text{ as } n \to
\infty
\end{equation}
and the theorem follows from Theorem \ref{theo3.1}.
\hfill \end{proof}

It is possible to use Proposition \ref{prop4.1} to obtain $\|v - P_{n, 1/2} v\|_{\tau} = o(\mu_{n+1}^{\tau-\sigma})$ with $0<\tau <1/2$ as $n \to \infty$ under the hypotheses of Theorem \ref{newtheo4.5}. But this does not immediately extend to a Rayleigh-Ritz eigenvector estimate in the $\tau-$norm. That remains an open problem. The theorem for harmonic Ritz eigenvector estimation in the energy norm is as follows.

\begin{theorem} \label{newtheo4.6}
Assume that $\mathcal{D}(A^{3/2})=\mathcal{D}(M^{3/2})$ with equivalent norms $|M^{3/2} v|_0$ and $\|A^{3/2} v\|_0$. Then if $\sigma \ge 1$ and $v \in \mathcal{D}(M^{\sigma})$,
\begin{equation} \notag
\|v- P_{n,1} v\|_{1/2}^2 =o(\mu_{n+1}^{1-2\sigma}) \text{ as } n\to \infty.
\end{equation}
Moreover, the conclusion of Proposition \ref{prop4.4} can be supplemented by
\begin{equation} \notag
\|u_j- u_j^{(n,1)}\|_{1/2}^2 =o(\mu_{n+1}^{1-2\sigma}), 1\le j \le \ell, \text{ as } n\to \infty,
\end{equation}
provided that the first $\ell$ eigenvectors, $u_1, \ldots, u_{\ell}$ are in $\mathcal{D}(M^{\sigma})$.
\end{theorem}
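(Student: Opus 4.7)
The plan is to mirror the duality argument of Theorem \ref{newtheo4.5}, moving one step up the scale: regular Rayleigh--Ritz ($\tau=1/2$) is replaced by harmonic Ritz ($\tau=1$), the target $\mathcal{H}_0$ estimate becomes an $\mathcal{H}_{1/2}$ estimate, and the ``extra regularity'' supplied by $A$ is correspondingly lifted from $\mathcal{D}(A)=\mathcal{D}(M)$ to $\mathcal{D}(A^{3/2})=\mathcal{D}(M^{3/2})$. Writing $P_n := P_{n,1}$, I would first dispense with the trivial case $v-P_n v=0$ and otherwise define $w_n$ by $A w_n = v - P_n v$; then $w_n\in\mathcal{D}(A^{3/2})$ since $v-P_n v\in\mathcal{D}(A^{1/2})$. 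Using that $\langle\cdot,\cdot\rangle_1=\langle A\cdot, A\cdot\rangle_0$, the identities
\[
\|v-P_n v\|_{1/2}^2 = \langle A(v-P_n v),\, v-P_n v\rangle_0 = \langle A(v-P_n v),\, A w_n\rangle_0 = \langle v-P_n v,\, w_n\rangle_1
\]
are immediate, and since $P_n$ is the $\mathcal{H}_1$-orthogonal projection onto the trial span, Galerkin orthogonality absorbs $P_n w_n$:
\[
\|v-P_n v\|_{1/2}^2 = \langle v-P_n v,\, w_n - P_n w_n\rangle_1 \le \|v-P_n v\|_1\,\|w_n - P_n w_n\|_1.
\]

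Both factors on the right I would then estimate via the spectral expansion from the proof of Proposition \ref{prop4.2}, after first noting the following interpolation consequence of the hypothesis: applying Proposition \ref{prop4.1} to the norm equivalence $\mathcal{D}(A^{3/2})=\mathcal{D}(M^{3/2})$ paired with the identity on $\mathcal{H}_0$ yields $\mathcal{D}(A^\tau)=\mathcal{D}(M^\tau)$ with equivalent norms for every $0\le\tau\le 3/2$, and in particular hypothesis \eqref{4.4} of Proposition \ref{prop4.4} is satisfied. Then for $v\in\mathcal{D}(M^\sigma)$ with $\sigma\ge 1$, the $\tau=1$ instance of the Proposition \ref{prop4.2} argument yields $\|v-P_n v\|_1 \le c\,\mu_{n+1}^{1-\sigma}\,|M^\sigma v|_0\,o(1)^{1/2}$, while the same argument applied to $w_n\in\mathcal{D}(M^{3/2})$ with smoothness index $3/2$ gives
\[
\|w_n - P_n w_n\|_1 \le c\,\mu_{n+1}^{-1/2}\,|M^{3/2} w_n|_0 \le c\,\mu_{n+1}^{-1/2}\,\|A^{3/2} w_n\|_0 = c\,\mu_{n+1}^{-1/2}\,\|v-P_n v\|_{1/2}.
\]
Inserting both bounds and dividing by $\|v-P_n v\|_{1/2}$ produces $\|v-P_n v\|_{1/2}=o(\mu_{n+1}^{1/2-\sigma})$, whose square is the asserted rate.

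The eigenvector supplement I would read off the second inequality of Theorem \ref{theo3.1} at $\tau=1$, namely $\|u_{j,n}-u_j^{(n,1)}\|_{1/2}\le c\,\|(I-P_{n,1})Q\|_{1/2}$ where $Q$ is the spectral projection onto the $\lambda_j$-eigenspace; since that eigenspace is finite-dimensional and contained in $\mathcal{D}(M^\sigma)$, the operator norm on the right is controlled by applying the first half of the theorem to a basis of the range of $Q$. The main obstacle I anticipate is confirming that $w_n$ lies not merely in $\mathcal{D}(A^{3/2})$ but in $\mathcal{D}(M^{3/2})$ with the expected norm bound --- this is exactly where the theorem's strengthened hypothesis (relative to the $\mathcal{D}(A)=\mathcal{D}(M)$ used for Theorem \ref{newtheo4.5}) earns its keep, since the spectral expansion that drives the $\mu_{n+1}^{-1/2}$ decay of $\|w_n-P_n w_n\|_1$ must be taken against the eigenvectors of $M$, not $A$.
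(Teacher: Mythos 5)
Your proof is correct and is essentially the paper's own argument: the paper omits the proof, saying only that it mimics Theorem \ref{newtheo4.5} with $w_n$ defined by $A^{3/2}w_n = v - P_{n,1}v$, which coincides with your duality argument up to the harmless relabeling $w_n \mapsto A^{1/2}w_n$ (your choice $A\,w_n = v - P_{n,1}v$). The interpolation consequence $\mathcal{D}(A^{\tau})=\mathcal{D}(M^{\tau})$, $0\le\tau\le 3/2$, via Proposition \ref{prop4.1}, and the appeal to Theorem \ref{theo3.1} for the eigenvector supplement are likewise the intended ingredients.
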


The proof of Theorem \ref{newtheo4.6} mimics that of Theorem \ref{newtheo4.5}, provided one starts by defining $w_n$ by $A^{3/2} \, w_n = v- P_{n,1} v$, and so is omitted. It is then possible to use Proposition \ref{prop4.1} to obtain $\|v - P_{n,1} v\|_{\tau} = o(\mu_{n+1}^{\tau-\sigma})$ with $1/2<\tau<1$ as $n \to \infty$, but again this does not lead directly to a corresponding eigenvector estimate.

\section{The Model Nonlinear Problem, Part II} \label{string}

There are at least two ways to formulate the spaces $\mathcal{H}_0$ and $\mathcal{G}_0$, and the operators $A$ and $M$ to implement the theory of Section \ref{spectral} for this problem. It is advantageous for our purposes to write \eqref{5.5} as
\begin{equation} \label{5.11}
A v_0 = -\frac{1}{\rho} \, v_0'' = \nu_0 \, v_0, \quad v_0(a)=v_0(b)=0
\end{equation}
with $\mathcal{D}(A) = H^2(a,b) \cap H_0^1(a,b)$. Then $A$ is selfadjoint in $L^2(a,b)$ with measure $\rho(x) dx$, which we take as $\mathcal{H}_0$. The quadratic forms which are needed for harmonic Ritz and regular Rayleigh-Ritz are then found to be
\[\langle A v, A v\rangle =\displaystyle{ \int_a^b \frac{\left(v''\right)^2}{\rho} dx} \text{ on } \mathcal{D}(A),\]
\[\langle A v, v\rangle =\displaystyle{ \int_a^b \left(v'\right)^2 dx} \text{ on } \mathcal{D}(A^{1/2})=H_0^1(a,b),\]
and
\[\langle v, v\rangle =\displaystyle{ \int_a^b v^2 \, \rho dx} \text{ on } \mathcal{H}_0.\]
As $M$ we take
\begin{equation} \label{5.12}
M \, v = - v'', \quad v(a) =v(b)=0,
\end{equation}
selfadjoint in $\mathcal{G}_0=L^2(a,b)$ with Lebesgue measure, $dx$. Then $\mathcal{D}(A)=\mathcal{D}(M)$ with equivalent norms, and the eigenvectors of $M$ generate the usual Fourier sine series. Note that $A v=0$ at $a$ and $b$ if and only if $v''(a) = v''(b)=0$, and since the same is true for $M$, we have $\mathcal{D}(A^2)=\mathcal{D}(M^2)$ (with equivalent norms.) But if $v\in\mathcal{D}(A^3)$,
\begin{align} \label{new5.3}
A^2 \, v &= -\frac{1}{\rho} \, \left( -\frac{1}{\rho} \, v''\right)''\notag \\
&=\frac{1}{\rho} \, \left(\frac{1}{\rho} \, v^{iv} + 2 \left(\frac{1}{\rho}\right)' \, v''' + \left(\frac{1}{\rho}\right)'' \, v'' \right)
\end{align}
must vanish at $a$ and $b$, while if $v \in \mathcal{D}(M^3)$
\[M^2 \, v = v^{iv}\]
must vanish at $a$ and $b$. Independence of the boundary functionals shows that $\mathcal{D}(A^3)$ and $\mathcal{D}(M^3)$ are different closed spaces of $H^6(a,b)$, unless $\rho'(a)=\rho'(b)=0$. These boundary functionals are unstable in the topology of $H^{9/2}(a,b)$ (cf. \cite{Grisvard}). So quadratic interpolation between $\mathcal{D}(A^3)$ and
$\mathcal{D}(A^2)$, and between $\mathcal{D}(M^3)$ and $\mathcal{D}(M^2)$
(as sketched in Section \ref{quadratic-interpolation}) gives
\[\mathcal{D}(A^{\nu}) = \mathcal{D}(M^{\nu}) \quad \text{for all } \nu <2+\frac{1}{4}= \frac{9}{4},\]
with equivalent norms. So one can apply Theorem \ref{theo4.3} with $2 \tau-1/2<9/4$, i.e., $\tau<11/8$ to our problem.

The other standard way to treat \eqref{5.5} is to let $z_0=\sqrt{\rho} \, v_0$ to get
\[A \, z_0 =-\frac{1}{\sqrt{\rho}} \, \left( \frac{z_0}{\sqrt{\rho}}\right)'' = \nu_0 \, z_0, \quad z_0(a)=z_0(b)=0. \]
The so defined operator $A$ is selfadjoint in $L^2(a,b)$ with the usual
Lebesgue measure. So taking this space as both $\mathcal{H}_0$
and $\mathcal{G}_0$, and $M$ as above yields the framework of Section \ref{spectral}.
But an analysis like that in the previous paragraph yields
only $\mathcal{D}(A^{\nu}) = \mathcal{D}(M^{\nu})$ for all $\nu <1+ \frac{1}{4}=\frac{5}{4}$. This yields slower rates of convergence, though Theorem \ref{newtheo4.4} can be applied with $2 \tau -1/2<5/4$, i.e., $\tau<7/8$.

We now proceed to apply the convergence theory of sections \ref{quadratic-interpolation} and \ref{spectral} to solution of \eqref{5.5} and \eqref{5.6} with the operators $A$ and $M$ chosen according to \eqref{5.11} and \eqref{5.12} respectively. As is well known, the eigenvalues $\mu_k$ (all simple) of $M$ are given by
\[\mu_k=\frac{k^2 \pi^2}{(b-a)^2}, \quad k=1, 2, \ldots,\]
with corresponding eigenfunctions (unnormalized)
\[q_k=\sin \frac{k \pi(x-a)}{b-a}, \quad a<x<b.\]

Since $\mu_k$ is proportional to $k^2$ and $\mathcal{D}(A^{\nu})=
\mathcal{D}(M^{\nu})$ for all $\nu<9/4$, Proposition \ref{prop4.2} shows that the Regular Rayleigh-Ritz convergence rates for any index $j$ are
\[\Lambda_j^{(n, 1/2)} = \lambda_j + o(n^{-\alpha}) \text{ for all } \alpha <7,\]
and
\[\|u_j - u_j^{(n,1/2)}\|_{1/2}= o(n^{- \alpha}) \text{ for all } \alpha <7/2.\]
But the result we need for calculation of the bifurcation coefficients is that of Theorem \ref{theo4.3}.

In order to accomplish this we will need to verify that \eqref{new4.2} holds. This will be carried out via integration by parts, which will also slightly improve and simplify the statements of the estimates of the preceding paragraph. Our abstract ``small oh'' estimates are analogues of a classical theorem on Fourier series coefficients of smoothly periodic functions, while the improvement comes from the corresponding ``big oh'' estimates for functions satisfying Dirichlet conditions (cf. \cite{Stakgold}, p. 130). This use of integration by parts is also employed in \cite{Bernardi-Maday}, \cite{Birk-Fix}, \cite{Boyd}. While the abstract theory of Section \ref{spectral} conveniently identifies the possible ``big oh'' power, it does not directly imply such improvement, as follows from examples in a different context (\cite{Greenlee}, pp. 155-156).

So let $(\lambda, u)$ be an eigenpair for the selfadjoint operator $A$, and observe that from the proof of Proposition \ref{prop4.4} that for $0 \le \tau \le 1$,
\begin{equation} \notag
\|\left(P_{n, \tau} - I\right) u \|_{\tau} \le c \, |\left(R_n -I \right) M^{\tau} u|_0.
\end{equation}
$\left(R_n -I \right) M^{\tau} u$ is just the tail of the Fourier sine series of $M^{\tau} u$,
\begin{equation} \notag
|\left(R_n -I \right) M^{\tau} u|_0=\sum_{k=n+1}^{\infty} |(M^{\tau} u, q_k)_0|^2 = \sum_{k=n+1}^{\infty} \mu_k^{2\tau} \, |(u, q_k)_0|^2.
\end{equation}
This is also the quantity of concern for larger values of $\tau$ for application of Theorem \ref{theo4.3}.

Since normalization of $q_k$ in $\mathcal{G}_0=L^2(a,b)$ is independent of $k$, we can bury it in the constant $c$, and integrate by parts six times to obtain
\begin{align} \label{new5.4}
\int_{a}^{b} u \, & \sin \frac{k \pi (x-a)}{b-a} \,  dx = \notag \\
\frac{(b-a)^5}{k^5 \pi^5} \,  & \left(u^{iv}(b) \cos k \pi - u^{iv}(a)-
\displaystyle{
\frac{(b-a)}{k \pi}\, \int_{a}^{b} u^{vi} \, \sin \frac{k \pi (x-a)}{b-a} \, dx}\right).
\end{align}
The boundary terms in the other five integrations by parts all vanish. To see if this predicts the rate of convergence, first note that since the eigenvector $u$ is in the domain of any power of $A$, \eqref{new5.3} implies that
\begin{equation} \notag
u^{iv} = - 2 \rho \, \left(\frac{1}{\rho}\right)' \, u''' \quad \text{ at } a \text{ and } b.
\end{equation}
Then since $\left( \frac{1}{\rho} u''\right)'= - \lambda \, u'$, it follows that
\begin{equation} \notag
u^{iv} = -2 \lambda \, \rho^2 \left(\frac{1}{\rho}\right)'
\,  u' \quad \text{ at } a \text{ and } b.
\end{equation}
Now, $u'(a)\neq 0 \neq u'(b)$ since $u$ is an eigenfunction and $u(a)=u(b)=0$, by uniqueness for the initial value problem. Thus $u^{iv} (a) \neq 0 \neq u^{iv}(b)$, unless $\rho'(a) =\rho'(b)=0$ in which case one obtains a more rapid rate of convergence. Excluding this special case, we have for $0 \le \tau \le 1$,
\begin{equation} \notag
\|\left(P_{n, \tau} - I \right) u \|_{\tau}^2 \le c \, \sum_{k=n+1}^{\infty} k^{4 \tau -10} \le c \, \int_{n}^{\infty} x^{4 \tau-10} = c \, n^{4 \tau -9}
\end{equation}
since $\mu_k = \frac{k^2 \pi^2}{(b-a)^2}$. So it follows as in the proof of Proposition \ref{prop4.2} that
\begin{equation} \notag
\lambda^{(n,1/2)}= \lambda+ O(n^{-7}),
\end{equation}
and
\begin{equation} \notag
\|u- u^{(n,1/2)}\|_{1/2}= O(n^{-7/2}),
\end{equation}
and from that of Theorem \ref{newtheo4.5} that
\begin{equation} \notag
\|u-u^{(n,1/2)}\|_0 = O(n^{-9/2}),
\end{equation}
all as $n\to \infty$. In addition, \eqref{new5.4} implies that there exist constants $c_1$ and $c_2$, independent of $k$, such that
\begin{equation} \notag
c_1 \, k^{-5} \le |(u, q_k)_0| \le c_2 \, k^{-5}
\end{equation}
for large $k$. So the numerator in \eqref{new4.2} is dominated by a constant times
\begin{equation} \notag
\sum_{k=n+1}^{\infty} k^{8 \tau-2} k^{-10} = \sum_{n+1}^{\infty} k^{8 \tau -12} \le \int_{n}^{\infty} x^{8 \tau-12} dx= \frac{n^{8 \tau-11}}{11-8 \tau}
\end{equation}
provided that $8 \tau-12<-1$, i.e., that $\tau<11/8$. Recalling that we must have $\sigma <9/4$, this is consistent with the restriction $\tau\le \frac{\sigma}{2}+\frac{1}{4}$ in the statement of Theorem \ref{theo4.3}. Similarly, the denominator in \eqref{new4.2} is minorized by
\begin{equation} \notag
\sum_{k=n+1}^{\infty} k^{4 \tau} k^{-10} = \sum_{k=n+1}^{\infty} k^{4 \tau-10}
\ge \int_{n+1}^{\infty} x^{4 \tau-10} dx= \frac{(n+1)^{4 \tau-9}}{9 -4 \tau},
\end{equation}
and so the estimate \eqref{new4.2} is
\begin{equation} \notag O\left(\frac{n^{8 \tau-11}}{(n+1)^{4 \tau-9}}\right)=O\left(\mu_{n+1}^{2 \tau-1}
\right)
\end{equation} as $n \to \infty$, for $1/2 \le \tau <11/8$.

Since we have already verified that $\|u- u^{(n,1/2)}\|_{1/2}=O(n^{-7/2})$, it now follows from the proof of Theorem \ref{theo4.3} that
\begin{align} \label{new5.5}
\Lambda^{(n,1/2)}&= \lambda+O(n^{-7}), \text{ and } \notag \\
\\
\|u-u^{(n,1/2)}\|_{\tau}&= O(n^{2 \tau-9/2}),\notag
\end{align}
for $1/2 \le \tau <11/8$ as $n\to \infty$.

We now examine use of Rayleigh-Ritz approximate eigenvalues and eigenvectors to calculate $\nu_1$ as given in \eqref{5.8}. Therein, $(\nu_0, v_0)$ is an eigenpair of $A u = \lambda u$. We will write $(\Lambda^{(n)}, u^{(n)})$ for the corresponding Rayleigh-Ritz eigenpair $(\Lambda^{(n,1/2)}, u^{(n,1/2)})$. Thus we estimate the rate of convergence of
\begin{equation} \label{5.15}
\Lambda^{(n)} \, \left( \rho(x) \, u^{(n)} \, \left({u^{(n)}}'\right)^{2t}, u^{(n)} \right) \text{ to }  \lambda \,
\left( \rho(x) \, u \, (u')^{2t} , u\right).
\end{equation}
Note that since we already have convergence rate estimates for $\Lambda^{(n)}$ to $\lambda$, it suffices to estimate the difference between
\begin{equation}
\int_a^b \rho(x) \, {u^{(n)}}^2  \, \left({u^{(n)}}'\right)^{2t} dx \text{ and }
\int_a^b \rho(x) \, u^2(x) \, \left(u'(x)\right)^{2t} dx. \notag
\end{equation}
Write the difference between these integrals as
\begin{equation} \label{5.161}
\int_a^b \rho(x) \, {u^{(n)}}^2 \left( \left({u^{(n)}}'\right)^{2t}- \left(u'(x)\right)^{2t} \right) dx+
\int_a^b \rho(x) \, \left(u'(x)\right)^{2t} \, \left({u^{(n)}}^2 - u^2\right) dx.
\end{equation}
Since $u'$ is smooth, the second integral in \eqref{5.161} is dominated by
\begin{equation}
\sup_{x \in (a,b)} \, |\rho(x) \, \left(u'(x)\right)^{2t}| \, \int_a^b |{u^{(n)}}^2 - u^2|dx, \notag
\end{equation}
and by the Cauchy-Schwarz inequality the latter integral is dominated by
\begin{equation}
\left( \int_a^b |u^{(n)}+ u|^2 dx\right)^{1/2}  \,  \left( \int_a^b |u^{(n)}- u|^2 dx\right)^{1/2}. \notag
\end{equation}
So by the preceding eigenvector estimates the second integral in \eqref{5.161} is $O(n^{-9/2})$. Since $u^{(n)}$ is uniformly bounded, the first integral in \eqref{5.161} is dominated by
\begin{equation}
\sup_{x \in (a,b)} \, |\rho(x) \, {u^{(n)}}^2| \, \int_a^b |\left({u^{(n)}}'\right)^{2t}- \left(u'(x)\right)^{2t}|dx, \notag
\end{equation}
and by the Cauchy-Schwarz inequality the latter integral is dominated by
\begin{equation}
\left( \int_a^b |{u^{(n)}}'- u'|^2 dx\right)^{1/2}  \,  \left( \int_a^b |\sum_{j=0}^{2t -1}  \left({u^{(n)}}'\right)^j \,
\left(u'\right)^{2t-j-1}|^2 dx\right)^{1/2}. \notag
\end{equation}

\begin{table} [htbp]
  \begin{center}
   \begin{tabular}{|c|c||c||}
    \hline \hline
$t$ &$\nu_1$ & $\nu_1^{(20)}$\\
    \hline
  & &   \\
1 & $\frac{-4(e-1)\pi^4 (\pi^2+\frac{1}{4})(7+ 4 \pi^2)}{e(1+ 20 \pi^2+ 64 \pi^4)}$  & -18.008997020330582    \\ & &  \\
2 & $\frac{-(e^2-1)\pi^6 (\pi^2+\frac{1}{4})(437+ 824 \pi^2+ 144 \pi^4)}
{64 e^2 (1 + 14 \pi^2 + 49 \pi^4+ 36 \pi^6)}$ &  -75.15014087855786  \\
& & 
\\
3 & $\frac{-16 (e^3-1)\pi^8 (\pi^2+\frac{1}{4})(1709+ 5540 \pi^2+ 3856 
\pi^4+ 320 \pi^6)}
{9 e^3 (729 + 9720 \pi^2 + 39312 \pi^4+ 52480 \pi^6+ 16384 \pi^8)}$ & -571.7347727528597      \\ & & 
\\
    \hline
\end{tabular}
\end{center}
\caption{Exact and calculated values of $\nu_1$ for $t=1, 2,$ and $3$.}
    \label{tabla}
\end{table}

  \begin{figure}[tbp18!]
\begin{center}
    \epsfig{figure=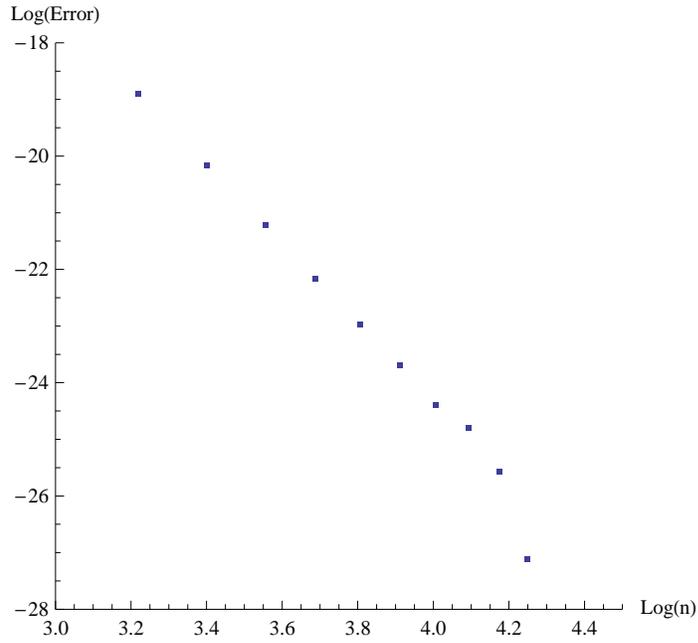,width=0.7 \textwidth}
\caption{Rate of convergence of $\lambda(n)$ to the first eigenvalue $\lambda=\pi^2+1/4$. The equation of the linear fit is given by $y=4.4156-7.22504 x$. Note that $|\lambda-\lambda(20)|=2.795 \times 10^{-8}$.}
\end{center}
     \end{figure}

\begin{figure}[tbp18!]
\begin{center}
    \epsfig{figure=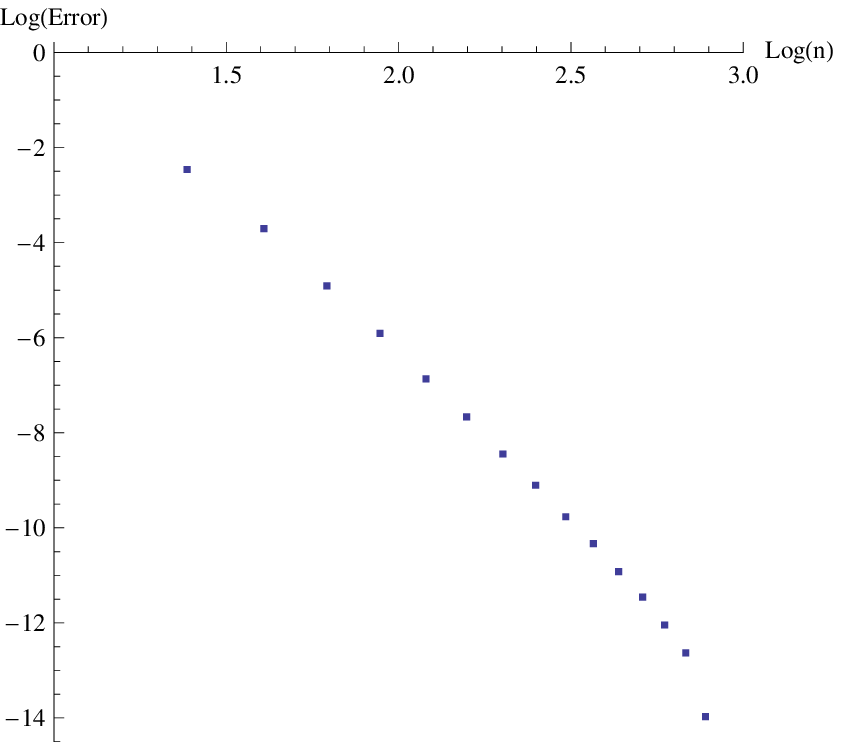,width=0.7 \textwidth}
\caption{Rate of convergence of the bifurcation coefficient $\nu_1^{(n)}$ to $\nu_1$ for $t=1$. The equation of the linear fit is given by $y=8.14125-7.29132 x$. Note that $|\nu_1-\nu_1^{(20)}|=1.86 \times 10^{-6}$.}
\end{center}
    \end{figure}

By the Sobolev inequality \cite{Grisvard2}, \eqref{new5.5} with any
$\tau >3/4$ implies uniform convergence of ${u^{(n)}}'$ to $u'$, so
the latter factor is bounded. Then the eigenvector estimate
\eqref{new5.5} with $\tau=1/2$ gives a rigorous rate of convergence
estimate for the first factor of $O(n^{-7/2})$. In other words, the
rate of convergence estimate for the Rayleigh-Ritz approximation to
the first bifurcation coefficient is the same as that for the
eigenvector in the energy norm, for any $t=1,2, \ldots$. The
following calculations seem to indicate that our convergence rate
estimates for the first bifurcation coefficient are quite
conservative. This may be due to the need to employ the
Cauchy-Schwarz inequality in the absence of $L^1$ estimates (see
Fig. 5.1-5.4). For the problem at hand, with $\rho(x)=1/x^2$, $a=1$, and $b=e$, 
the exact and calculated values of $\nu_1$ for $t=1, 2,$ and $3$ are shown in Table 5.1. The eigenvalue and normalized eigenfunction are $\lambda_1=\pi^2+1/4$ and $v_1(x)=\sqrt{2} \, \sin\left(\ln x \right)$. Calculations were performed using {\em Wolfram Mathematica 6.0}.

\begin{figure}[tbp19!]
\begin{center}
    \epsfig{figure=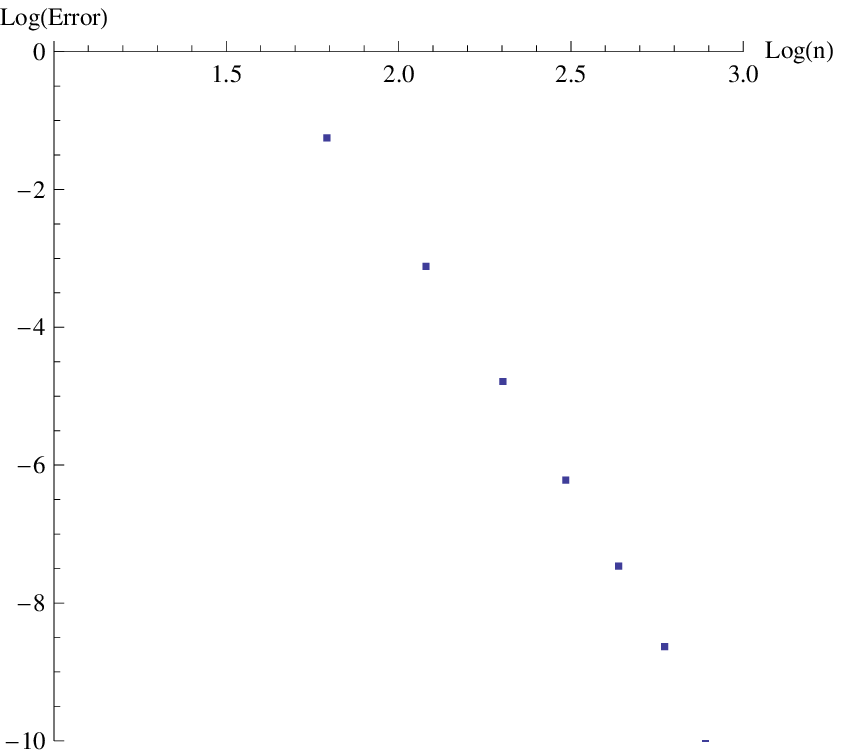,width=0.7 \textwidth}
\caption{Rate of convergence of the bifurcation coefficient $\nu_1^{(n)}$ to $\nu_1$ for $t=2$. The equation of the linear fit is given by $y=10.8453-6.97158 x$. Note that $|\nu_1-\nu_1^{(20)}|=4.3 \times 10^{-5}$.}
\end{center}
    \end{figure}
    
    \begin{figure}[tp19!]
\begin{center}
    \epsfig{figure=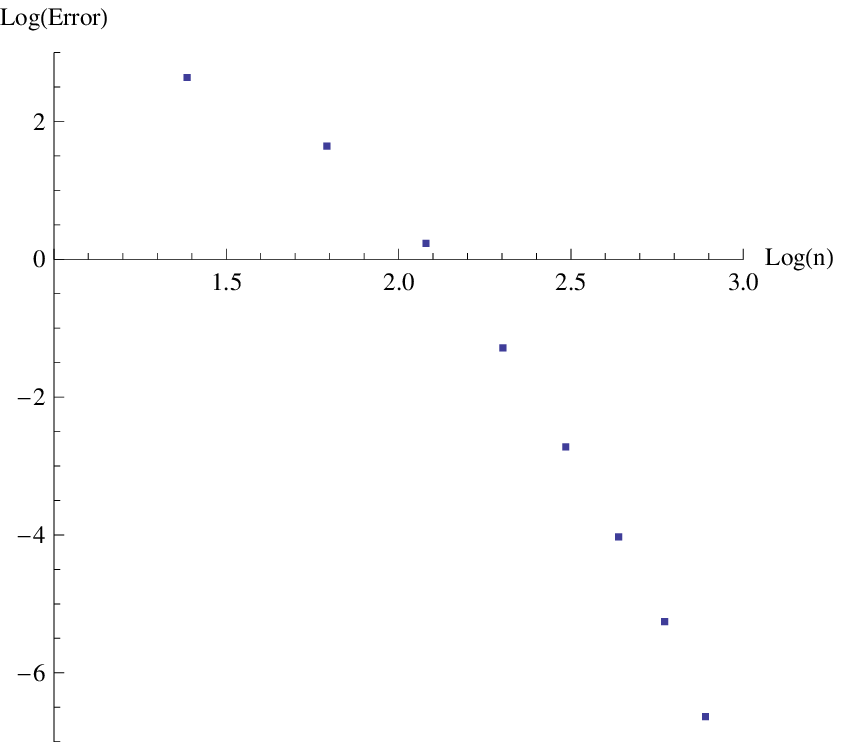,width=0.7 \textwidth}
\caption{Rate of convergence of the bifurcation coefficient $\nu_1^{(n)}$ to $\nu_1$ for $t=3$. The equation of the linear fit is given by $y=12.4435-6.26615 x$. Note that $|\nu_1-\nu_1^{(20)}|=1.1 \times 10^{-3}$.}
\end{center}
     \end{figure}

We now investigate the Rayleigh-Ritz approximation for $v_1$. Convergence of our procedure for $\nu_2, v_2, \ldots$ will then be clear. While this will verify convergence, the method about to be presented appears to be impractical as a numerical algorithm for these higher order terms.

Recall that by \eqref{5.6} $v_1$ is the solution of
\begin{equation} \notag
-v_1''-\nu_0\, \rho(x) v_1=\nu_1 \, \rho(x) \, v_0 +\frac{\nu_0}{2 t} \, \rho(x) \, v_0 \, \left(v_0'\right)^{2 t}, \quad v_1(a)=v_1(b)=0,
\end{equation}
where by \eqref{5.8},
\begin{equation} \notag
\nu_1= -\frac{\nu_0}{2 t} \, \left(\rho(x) v_0 (v_0')^{2t}, v_0\right)=  -\frac{\nu_0}{2 t} \, \langle
v_0 (v_0')^{2t}, v_0 \rangle
\end{equation}
with $\langle v_0, v_0 \rangle=1$. With this value of $\nu_1$ and normalization of $v_0$, this is equivalent to
\begin{equation} \notag
\left(A-\nu_0\right) \, v_1= \nu_1 v_0+ \frac{\nu_0}{2 t} \, v_0 \, \left({v_0}'\right)^{2t},
\end{equation}
or
\begin{equation} \label{new5.8}
v_1= \left(A-\nu_0\right)^{-1} \, \left(\nu_1 v_0+ \frac{\nu_0}{2 t} \, v_0 \, \left({v_0}'\right)^{2t}\right)
\end{equation}
where $A-\nu_0$ has been inverted on the $\mathcal{H}_0$ orthogonal complement of $v_0$, and the component of $v_1$ parallel to $v_0$ is taken to be zero. We now fix attention on the least eigenvalue of $A$, i.e., $\nu_0=\lambda_1$ and $v_0=u_1$, the procedure for higher eigenvalues being entirely analogous. The spectral expansion of \eqref{new5.8} is then
\begin{align} \notag
v_1 &= \frac{\nu_0}{2t} \, \sum_{j=2}^{\infty} \frac{\Big \langle v_0 \, \left(v_0'\right)^{2t}, u_j \Big \rangle_0}{\lambda_j - \lambda_1} \, u_j \notag \\
&= \frac{\lambda_1}{2t} \, \sum_{j=2}^{\infty} \frac{\Big \langle u_1 \, \left(u_1'\right)^{2t}, u_j\Big \rangle_0 }{\lambda_j - \lambda_1}\, u_j.
\end{align}
Recalling the Rayleigh-Ritz normalization
\begin{equation}
\langle u_j^{(n)}, u_k^{(n)}\rangle_{{}_{0}} =\delta_{jk}, \quad   a( u_j^{(n)}, u_k^{(n)}) =\Lambda_j^{(n)} \, \delta_{jk}, \notag
\end{equation}
where we again write $u_j^{(n)}$ for $u_j^{(n,1/2)}$, etc., gives
\begin{equation} \notag
v_1^{(n)}= \frac{\Lambda_1^{(n)}}{2t} \, \sum_{j=2}^{n} \frac{\Big \langle u_1^{(n)} \, \left({u_1^{(n)}}'\right)^{2t}, {u_j^{(n)}} \Big \rangle_0}{\Lambda_j^{(n)} - \Lambda_1^{(n)}} \, u_j^{(n)}.
\end{equation}
As noted previously, by Theorem \ref{theo4.3}, $u_j^{(n)} \to u_j$ in $C_0^1[a,b]$ for each $j$. Thus we have term by term convergence in $\mathcal{H}_{1/2}$, and convergence of $v_1^{(n)}$ to $v_1$ in $\mathcal{H}_{1/2}$ will follow from the dominated convergence theorem for vector valued functions (cf. \cite{Dunford-Schwartz}) provided that $\|v_1^{(n)}\|_{1/2}$ is uniformly bounded with respect to $n$.
Again using the Rayleigh-Ritz normalization we have
\begin{equation}
\|v_1^{(n)}\|_{1/2}^2 = a(v_1^{(n)}, v_1^{(n)}) =
\left(\frac{\Lambda_1^{(n)}}{2t}\right)^2 \, \displaystyle{\sum_{j=2}^{n} \frac{\Big|\Big\langle u_1^{(n)} \, \left({u_1^{(n)}}'\right)^{2t}, u_j^{(n)} \Big\rangle_0\Big|^2 \, \Lambda_j^{(n)}}{\left(\Lambda_j^{(n)} - \Lambda_1^{(n)}\right)^2}}. \notag
\end{equation}
This implies that
\begin{align}
\|v_1^{(n)}\|_{1/2}^2 & \le c \, \|u_1^{(n)} \, \left({u_1^{(n)}}'\right)^{2t}\|_{0}^2 \notag \\
& \le c \, \|u_1 \, \left({u_1}'\right)^{2t}\|_{0}^2,
\notag \end{align}
and convergence in $\mathcal{H}_{1/2}$ is proved. Rates of convergence in various interpolation norms for a fixed finite number of terms in $v_1^{(n)}$ to those in $v_1$ follow from the preceding theory. But this is not sufficient for estimates of the rates of convergence of $v_1^{(n)}$ to $v_1$. Theoretical convergence of $\nu_2^{(n)}$ to $\nu_2$, $v_2^{(n)}$ to $v_2$, etc. follow similarly.

\section{Examples and Remarks}
Let $\mathcal{D}$ denote the unit ball on $\rz^d$ and consider the branching problem
\begin{equation} \notag
-\Delta y + y^{p}  = \lambda \, \rho(x) \, y \quad \text{ in } \mathcal{D}, \quad y =0 \text{ on } \partial \mathcal{D},
\end{equation}
where $\Delta$ is the Laplacian, $p$ is an integer greater than or equal to 2, and the density function $\rho(x)$ is smooth and strictly positive on $\overline{\mathcal{D}}$. As usual $\partial \mathcal{D}$ denotes the boundary of $\mathcal{D}$, $|x|=1$, and
$\overline{\mathcal{D}}$ is the closure of $\mathcal{D}$, $|x|\le 1$. By setting $y=\epsilon^{\frac{1}{p-1}} \, u$, with $\epsilon$ small and positive, we seek normalized solutions to
\begin{equation} \label{new6.1}
-\Delta u + \epsilon \, u^{p}  = \lambda \, \rho(x) \, u \quad \text{ in } \mathcal{D}, \quad  u =0 \text{ on } \partial \mathcal{D}.
\end{equation}
A variant of \eqref{new6.1} for a bounded simply connected domain $\Omega$ in $\rz^2$ is obtained for
\begin{equation} \notag
-\Delta u + \epsilon \, u^{p}  = \lambda \, u \quad \text{ in } \Omega, \quad u =0 \text{ on } \partial \Omega,
\end{equation}
by use of a conformal map $w=f(z)$, $U(z)=u(w)$, to the unit disk $\mathcal{D}$ obtaining
\begin{equation} \label{new6.2}
-\Delta U + \epsilon \, \Big|\frac{dw}{dz}\Big|^2 \, U^{p}  = \lambda \, \Big|\frac{dw}{dz}\Big|^2 \, U \quad \text{ in } \mathcal{D}, \quad U =0 \text{ on } \partial \mathcal{D}
\end{equation}
(cf. \cite{Kuttler-Sigillito}). Provided that $\partial \Omega$ is smooth so that $f$ is conformal on $\partial \Omega$, $\rho(x)=|\frac{dw}{dz}|^2$ has a positive lower bound and our theory applies to \eqref{new6.2} as well as to \eqref{new6.1} (with the exception of strictly radial solutions of \eqref{new6.1}).

Existence of solutions of \eqref{new6.1} or \eqref{new6.2} bifurcating from a simple eigenvalue of the linearized ($\epsilon=0$) problem follows as in \cite{Sattinger}
by use of the Banach space $C^{2, \alpha}(\overline{\mathcal{D}})$ of $C^2$ functions having H\"older continuous derivatives with exponent $\alpha$, $0<\alpha<1$. The formalism sketched in the rotating string problem leads to the bifurcation coefficient for \eqref{new6.1}
\begin{equation} \label{new6.3}
\nu_1=\displaystyle{\int_{\mathcal{D}} \, u^{p+1} \, dx}
\end{equation}
where $u$ is the eigenvector corresponding to the simple eigenvalue $\lambda$ of \eqref{new6.1} with $\epsilon$ set equal to zero and normalized by $\int_{\mathcal{D}} \, \rho(x) \, u^{2} \, dx=1$. An analogous formula holds for \eqref{new6.2}, and we assume that $\rho(x)$ is such that $u$ is unknown so that we must approximate $u$ appropriately to approximate $\nu_1$.

The interpolation theory is directly analogous to that previously detailed in the rotating string problem. The eigenvalue problem for the operator $A$ is given by
\begin{equation} \notag
A \, v_0 = -\frac{1}{\rho} \, \Delta v_0 \quad \text{ in } \mathcal{D}, \quad v_0=0
\text{ on } \partial \mathcal{D},\end{equation}
with $\mathcal{D}(A) = H^2(\mathcal{D}) \cap H_0^1(\mathcal{D})$. The operator $A$ is selfadjoint in $L^2(\mathcal{D})$ with measure $\rho(x) dx$. As $M$ we take
\begin{equation} \notag
M v = - \Delta v
\quad \text{ in } \mathcal{D}, \quad v=0
\text{ on } \partial \mathcal{D},\end{equation}
which is selfadjoint in $\mathcal{G}_0=L^2(\mathcal{D})$ with Lebesgue measure, $dx$. As previously, $\mathcal{D}(A)=\mathcal{D}(M)$ with equivalent norms, and since $A v=0$ on $\partial \mathcal{D}$ if and only if $-\Delta v= M v=0$ on $\partial \mathcal{D}$, we have $\mathcal{D}(A^2)=\mathcal{D}(M^2)$ with equivalent norms (cf. \cite{Lions-Magenes} for the regularity theory). But if $v \in \mathcal{D}(A^3)$
\begin{align}
A^2 \, v &= -\frac{1}{\rho} \, \Delta \left(-\frac{1}{\rho} \, \Delta \right) \notag \\
&= \frac{1}{\rho} \, \left(\frac{1}{\rho} \, \Delta^2 v + 2 \nabla \left(\frac{1}{\rho}\right) \cdot \nabla \left(\Delta v\right) + \Delta \left(\frac{1}{\rho}\right) \Delta v\right), \notag
\end{align}
where $\nabla=$ gradient, must vanish on $\partial \mathcal{D}$. On $\partial \mathcal{D}$, the latter is
\begin{equation} \label{new6.4}
A^2 \, v = \frac{1}{\rho} \, \left(\frac{1}{\rho} \, \Delta^2 v + 2 \nabla \left(\frac{1}{\rho}\right) \cdot \nabla \left(\Delta v\right) \right),
\end{equation}
and if $v \in \mathcal{D}(M^3)$,
\begin{equation} \label{new6.5}
M^2 \, v =\Delta^2 v
\end{equation}
must vanish on $\partial \mathcal{D}$. Since $v \in \mathcal{D}(A)=\mathcal{D}(M)$, $\Delta v= 0$ on $\partial \mathcal{D}$, and so $\nabla \left(\Delta v\right)$ is orthogonal to $\partial \mathcal{D}$, and \eqref{new6.4} and \eqref{new6.5} are the same boundary conditions if $\nabla \left(\frac{1}{\rho}\right)$ is tangential to $\partial \mathcal{D}$ (or vanishes there). We now show that when this is not the case the boundary conditions \eqref{new6.4} and \eqref{new6.5} are not equivalent. This leads to the same critical value $\tau=9/4$ as in the rotating string example.

So suppose there is a point $P$ on $\partial D$ where $\nabla\left(\frac{1}{\rho}\right) \cdot \overrightarrow{n}\neq 0$, where $\overrightarrow{n}$ is the unit outward normal to $\partial \mathcal{D}$. By continuity there is a relative neighborhood $\mathcal{N}$ of $P$ in $\partial \mathcal{D}$ on which $\nabla\left(\frac{1}{\rho}\right) \cdot \overrightarrow{n}$ is non-zero of a fixed sign. Now note that if $u$ is an eigenfunction of $A$ with eigenvalue $\lambda$, the boundary condition $A^2 u=0$ becomes
\begin{equation} \notag
\Delta^2 u = 2 \lambda \rho \nabla\left(\frac{1}{\rho}\right) \cdot \nabla u =
\nabla\left(\frac{1}{\rho}\right) \cdot \overrightarrow{n} \, \frac{\partial u}{\partial n} \quad
\text{ on } \partial \mathcal{D},
\end{equation}
since $\nabla u$ is orthogonal to $\partial \mathcal{D}$. If there is a point of $\mathcal{N}$ at which $\frac{\partial u}{\partial n}\neq 0$ there is a relative neighborhood $\mathcal{N}' \subset \mathcal{N}$ of $P$ on which $\frac{\partial u}{\partial n}$ is non-zero of fixed sign. Then the boundary conditions \eqref{new6.4} and \eqref{new6.5} define different closed subspaces of $H^6(\mathcal{D})$, i.e., $\mathcal{D}(A^3) \neq \mathcal{D}(M^3)$. The other possibility is that $\frac{\partial u}{\partial n}=0$ on $\mathcal{N}$. But then $u$ has zero Cauchy data on $\mathcal{N}$. Since $\mathcal{D}$ is strictly convex at $P$, by \cite{Nirenberg} there is a disk $\mathcal{S}$ centered at $P$ with $u\equiv 0$ on $\mathcal{D}\cap \mathcal{S}$. But then by a theorem of \cite{Muller} (cf. also \cite{Nirenberg}), $u \equiv 0$ on $\mathcal{D}$, contrary to $u$ being an eigenfunction of $A$.

So if $\nabla \left(\frac{1}{\rho}\right)$ is not tangential (or zero) at any point of $\partial \mathcal{D}$ the results of \cite{Grisvard} give, by interpolation between $\mathcal{D}(A^3)$ and $\mathcal{D}(A^2)$, and between $\mathcal{D}(M^3)$ and $\mathcal{D}(M^2)$, that $\mathcal{D}(A^{\nu})=\mathcal{D}(M^{\nu})$ for all $\nu<9/4$. By details in \cite{Grisvard} not elaborated on in Section \ref{rrm}, $\nu$ cannot be increased to $9/4$. Direct application of Proposition 4.1, Proposition 4.2, Theorem 4.5, and Theorem 4.6 now proceed exactly as in the rotating string example. For example, Proposition 4.1 gives the Rayleigh-Ritz eigenvalue approximation
\begin{equation} \notag
\Lambda_j^{(n, 1/2)} = \lambda_j + o(\mu_{n+1}^{1-2 \sigma}) \text{ for all } \sigma <9/4
\end{equation}
as $n \to \infty$. However, in $\rz^d$, $\mu_{n+1} \approx c \, n^{2/d}$ (cf. \cite{Weyl})
so that
\begin{equation} \notag
\Lambda_j^{(n, 1/2)} = \lambda_j + o(n^{2(1-2 \sigma)/d}) \text{ for all } \sigma <9/4
\end{equation}
as $n \to \infty$.

Before looking at rates of convergence for Rayleigh-Ritz or harmonic Ritz approximations to the bifurcation coefficient \eqref{new6.3} we look, as in the rotating string example, to improve these estimates to ``big oh at $9/4$'' by multi-dimensional integration by parts. With $u$ an eigenfunction of $A$, i.e.,
\begin{equation} \notag
-\Delta u = \lambda \, \rho(x) \, u \quad \text{ in } \mathcal{D}, \quad u = 0 \text{ on } \partial \mathcal{D},
\end{equation}
and $q_k$ an eigenfunction of $M$, i.e.,
\begin{equation} \notag
-\Delta q_k = \mu_k \, q_k \quad \text{ in } \mathcal{D}, \quad q_k = 0 \text{ on } \partial \mathcal{D},
\end{equation}
we recall the Lagrange-Green identity
\begin{equation} \label{new6.6}
\int_{\mathcal{D}} \left(u \Delta q_k - q_k \Delta u\right) dx = \int_{\partial \mathcal{D}} \left( u \frac{\partial q_k}{\partial n}-
 q_k \frac{\partial u }{\partial n} \right) dS.
 \end{equation}
Herein, as previously, $\frac{\partial}{\partial n}$ is differentiation in the outward normal direction to $\partial \mathcal{D}$. By first substituting $\Delta^2 u$ for $u$, and then $\Delta u$ for $u$ in \eqref{new6.6}, we obtain
\begin{equation} \label{new6.7}
\int_{\mathcal{D}} u q_k dx = \frac{1}{\mu_k^3} \, \int_{\mathcal{D}} q_k \, \Delta^3 u  dx + \frac{1}{\mu_k^3} \, \int_{\partial \mathcal{D}} \frac{\partial q_k}{\partial n} \, \Delta^2 u dS.
\end{equation}
To obtain \eqref{new6.7} we have also used $u=\Delta u=q_k=0$ on $\partial \mathcal{D}$. \eqref{new6.7} is the multi-dimensional analogue of (5.4), and since
$\frac{\partial q_k}{\partial n}=\nabla q_k \cdot \overrightarrow{n}$ and $\text{div}(\grad q_k)=\Delta q_k$, we expect that
\begin{equation}\notag
\int_{\mathcal{D}} u q_k dx = O(\mu_k^{-5/2}) \text{ as } k \to \infty,
\end{equation}
as in the one dimensional case. But verification of (4.3) is problematic, unless we restrict to the radially symmetric case, i.e., that $\rho$ depends only on the radial variable. The problem arises with the denominator of (4.3).

Now we treat the two dimensional case in some detail, with later comments on higher dimensions. In the usual polar coordinates, the eigenvalue problem for $q_k$ is now
\begin{equation} \notag
-\left(\frac{\partial^2 q_k}{\partial r^2} + \frac{1}{r}\, \frac{\partial q_k}{\partial r} + \frac{1}{r^2}\, \frac{\partial^2 q_k}{\partial \theta^2}\right)=\mu_k \, q_k
\quad \text{ for } r<1, \quad q_k =0 \text{ for } r=1.
\end{equation}
Separation of variables yields the eigenvalues
\begin{equation} \notag
\mu_{m,s}=j_{m,s}^2,
\end{equation}
where $j_{m,s}$ is the $s^{th}$ positive zero of the Bessel function $J_m$. These must be ordered in an increasing sequence to obtain the $\mu_k$'s. For $m=0$, the eigenvalues $j_{0,s}^2$ of $M$ are simple with normalized radial eigenfunctions
\begin{equation} \notag
q_{0,s}(r, \theta)= \frac{1}{\sqrt{\pi}}\, \frac{J_0(j_{0,s} r)}{J_1(j_{0,s})}, \quad s=1, 2, \ldots,
\end{equation}
while for $m=1,2, \ldots$ the eigenvalues $j_{m,s}^2$ of $M$ are double with normalized
eigenfunctions
\begin{equation} \notag
q_{m,s}^{(1)}(r, \theta)= \sqrt{\frac{2}{\pi}} \, \frac{J_m(j_{m,s} r)}{J_{m+1}(j_{m,s})} \, \cos{ m \, \theta}
\end{equation}
and
\begin{equation} \notag
q_{m,s}^{(2)}(r, \theta)= \sqrt{\frac{2}{\pi}} \, \frac{J_m(j_{m,s} r)}{J_{m+1}(j_{m,s})} \, \sin{ m \, \theta}
\end{equation}
for $s=1, 2, \ldots$, (cf. \cite{AS}, \cite{Hilde}). Since the outward normal derivative is given by differentiation with respect to $r$ at $r=1$, we get
\begin{equation} \notag
\frac{\partial q_{0,s}}{\partial r}\big|_{r=1}= \frac{(-1)^{s+1}}{\sqrt{\pi}} \, j_{0,s} \quad , m=0, s=1, 2, \ldots,
\end{equation}
and for $m=1, 2, \ldots, $
\begin{equation} \notag
\frac{\partial q^{(1)}_{m,s}}{\partial r}\big|_{r=1}= \sqrt{\frac{2}{\pi}} \, j_{m,s} \, (-1)^{s+1} \, \cos{m \, \theta}
\end{equation}
and
\begin{equation} \notag
\frac{\partial q^{(2)}_{m,s}}{\partial r}\big|_{r=1}= \sqrt{\frac{2}{\pi}} \, j_{m,s} \, (-1)^{s+1} \, \sin{m \, \theta} \quad
\end{equation}
for $s=1, 2, \ldots$. So the expectation that \eqref{new6.7} gives $\int_{\mathcal{D}} \, u \, q_k dx = O(\mu_k^{-5/2})$ is borne out.

In the radial case we have $\mu_s = j_{0,s} = (s-\frac{1}{4}) \pi + O(\frac{1}{s})$ as $s\to \infty$, \cite{AS} and an analysis like that in the rotating string example shows that (4.3) is satisfied with $\frac{1}{2}\le \tau <\frac{11}{8}$. So in the radial case, using the fact that the Sobolev inequalities (cf. \cite{Grisvard2}) imply that convergence in $\mathcal{H}_{\tau}$ for any $\tau>1/2$ implies uniform convergence, Theorem 4.3 and Theorem 4.5 give convergence of the Rayleigh-Ritz approximation to $\nu_1$ in (6.3) at the rate of $O(n^{-9/4})$. This is the same rate of convergence as that for the Rayleigh-Ritz approximation to the eigenvector $u$ in the $L^2$ norm.

In the non-radial case one can invoke Proposition 4.2 and Theorem 4.6 to obtain the convergence rate $O(\mu_{n+1}^{-7/4})=O(n^{-7/4})$ for the harmonic Ritz approximation to $\nu_1$, since the eigenvector approximation is uniform. The Rayleigh-Ritz method can also be employed via the Sobolev inequality
\begin{equation} \label{new6.8}
H^1(\mathcal{D}) \subset_{c} W^t_{q}(\mathcal{D}) \subset_c
W^0_{q}(\mathcal{D})=L^{q}(\mathcal{D}),
\end{equation}
where $q>2$ and $t=2/q$ (cf. \cite{Grisvard2}). So convergence of $u^{(n)}$ to $u$ in $H^1(\mathcal{D})$ implies that $u^{(n)}$ is bounded in $L^{q}(\mathcal{D})$ for any $q>2$. Thus H\"older's inequality yields convergence of the Rayleigh-Ritz approximation to $\nu_1$ in $L^{q'}(\mathcal{D})$, $\frac{1}{q}+\frac{1}{q'}=1$. This is majorized by convergence in $L^2(\mathcal{D})$ where the rate is at least $O(\mu_{n+1}^{-9/4})= O(n^{-9/4})$ as $n\to \infty$, by Theorem 4.5.

For dimensions higher than two one may employ spherical harmonics and Rayleigh-Ritz as above for $p=2$ and $p=3$ in dimension three, and for $p=2$ in dimension 4. The limitations on $p$ stem from the need for $t\ge 0$ in the analogues of \eqref{new6.8} \cite{Grisvard2}. Harmonic Ritz is directly applicable for any power $p$ in dimension three via uniform convergence. Further conclusions can be obtained from Theorem 4.3 and Theorem 4.5 in the radial case. One can also consider ``derivative nonlinearities'' such as $|\nabla u|^2$ in place of $u^p$ in \eqref{new6.1} for further examples, and the addition of linear potential terms $q(x) u$. The dimensional dependence of the various conclusions comes from the Sobolev inequalities.

We now close with some remarks. For domains with shapes other than balls the main constraint on the preceding methods is the availability of computable approximate eigenvectors, the $q_k$'s. Such are obviously available for rectangles, or for two dimensional domains mapped conformally onto rectangles. But the interpolation results of \cite{Grisvard} are not application. Theorems of \cite{Greenlee} do apply in the case of homogeneous Dirichlet boundary conditions for $A$, but only up to $\tau=1$. So the theory of quadratic interpolation between closed subspaces of Sobolev spaces determined by homogeneous boundary conditions is currently inadequate to obtain results corresponding to the previous over domains with corners.

The finite element method is of course well adapted to the construction of approximate eigenvectors over domains with irregular shapes. One estimates the projections in Theorem 3.1 by use of the ``approximation theorem'', as detailed for $\tau=1/2$ in \cite{Strang-Fix}. Finite element versions of Theorem 3.1, Proposition 4.1, Proposition 4.2, Theorem 4.5, and Theorem 4.6 are attainable. But Theorem 4.3 and Theorem 4.4 each depend on analogue of the ``inverse inequality'' of the spectral Galerkin literature (cf. \cite{Mercier}). It is an open problem as to whether the inverse estimates of finite element theory \cite{Brenner-Scott} can be similarly applied. Quadratic interpolation between closed subspaces of Sobolev spaces determined by homogeneous boundary conditions in the context of the finite element method is considered in \cite{Bramble}, whose results supplement those of \cite{Greenlee}, and in \cite{Brenner-Scott}. In the latter reference the effect of the boundary conditions in quadratic interpolation is not presented, but it does not affect the rates of convergence due to the construction of the finite elements. It does, however, affect the norms of the interpolation spaces in the ``exceptional'' cases.

We have concentrated on the problem of estimation of bifurcation coefficients. But the theorems of Section 4 are also germane to estimating eigenvalue bounds by the so-called ``eigenvector free method'', EVF for short. These bounds are complementary to the Rayleigh-Ritz bounds, and hence provide explicit error estimates. The EVF technique was originated in \cite{Gay-EVF}, and our results herein relate to the convergence theory of
\cite{Beattie-Greenlee-proc}. This problem remains to be explored.


\renewcommand{\theequation}{A.\arabic{equation}}
  \setcounter{equation}{0}  
  \section*{Appendix}  
  \label{appendixA}

We give the proof of Theorem 3.1 in the case $\tau=\frac{1}{2}$,
i.e., regular Rayleigh-Ritz. This enables us to simplify notation
by writing $\| \hskip 0.3 cm \|_{a}$ for $\| \hskip 0.3 cm \|_{1/2}$, $P_n$ for $P_{n,1/2}$, and $\Lambda_j^{(n)}$ for
$\Lambda_j^{(n,1/2)}$. The theorem as stated is obtained simply by substituting $\tau$ for $1/2$ in the appropriate
places. Note that for $\tau <1/2$, this replaces $\mathcal{H}=\mathcal{H}_0$ by $\mathcal{H}=\mathcal{H}_{\tau-1/2}$, a
``negative norm'' space.
\begin{proof}
Let $\mathcal{E}_{\ell} = span \{u_1, \ldots, u_{\ell} \}$ and let $e_{\ell}$ be the set of vectors in $\mathcal{E}_{\ell}$ which are normalized in $\mathcal{H}$. The first step is to prove that if
\begin{equation}
\rho_{\ell}^{(n)} = \max_{u \in e_{\ell}} \big| 2 Re \langle u, u - P_n u \rangle- \| u - P_n u\|^2\big|<1, \notag
\end{equation}
then
\begin{equation}
\Lambda_{\ell}^{(n)} \le \dfrac{\lambda_{\ell}}{1-\rho_{\ell}^{(n)}}. \label{A1}
\end{equation}
\eqref{A1} will follow easily from the minimax principle provided that $P_n \mathcal{E}_{\ell}$ is $\ell$-dimensional.
To see this, observe that if $P_n \mathcal{E}_{\ell}$ has dimension less than $\ell$, there exists $w\in e_{\ell}$ with
$P_n w=0$. But then
\begin{equation} \notag
\rho_{\ell}^{(n)} \ge \big| 2 Re \langle w, w - P_n w \rangle- \| w - P_n w\|^2\big|=\|w\|^2=1.
\end{equation}
Thus, if $\rho_{\ell}^{(n)}<1$, which by completeness of the Rayleigh-Ritz trial vectors is true for $n$ large
\begin{equation} \notag
\Lambda_{\ell}^{(n)} \le \max_{v \in P_n \mathcal{E}_{\ell}} \, \dfrac{a(v)}{\|v\|^2}
=\max_{u \in e_{\ell}} \dfrac{a(P_n u)}{\|P_n u\|^2}.
\end{equation}
Since $P_n$ is an orthogonal projection in $\mathcal{D}(a)$, $a(P_n u) \le a(u)$, and also for $u$ in $e_{\ell}$,
\begin{eqnarray}
\|P_n u \|^2 & = & \|u - (u - P_n u)\|^2 \notag \\
& = & \|u\|^2 -2 Re \langle u, u - P_n u\rangle + \|u - P_n u\|^2 \notag \\
& \ge & 1- \rho_{\ell}^{(n)}. \notag
\end{eqnarray}
Thus if $\rho_{\ell}^{(n)} <1$,
\begin{equation} \notag
\Lambda_{\ell}^{(n)} \le \max_{u \in e_{\ell}} \, \dfrac{a(u)}{1-\rho_{\ell}^{(n)}}
=\dfrac{\lambda_{\ell}}{1-\rho_{\ell}^{(n)}}.
\end{equation}
or equivalently,
\begin{equation} \notag
\dfrac{1}{\Lambda_{\ell}^{(n)}} \ge
\dfrac{1-\rho_{\ell}^{(n)}}{\lambda_{\ell}},
\end{equation}
the corresponding estimate for eigenvalues of $T$.

The second term in the definition of $\rho_{\ell}^{(n)}$ is dominated by
$\|\left(I-P_n\right) \, Q_{\ell}\|^2$, where $Q_{\ell}$ is the orthogonal projection onto $\mathcal{E}_{\ell}$. To obtain an analogous estimate for $\langle u , u- P_n u\rangle$, first write $u \in e_{\ell}$ as $u=\sum_{i=1}^{\ell} c_i u_i$. Then

\begin{eqnarray}
\langle u_i, u -P_n u \rangle & = & \lambda_i^{-1} a(u_i, u - P_n u) \notag \\
& = & \lambda_i^{-1} a(u_i - P_n u_i, u - P_n u), \notag
\end{eqnarray}
since $P_n$ is an orthogonal projection in $\mathcal{D}(a)$. Thus if $u \in e_{\ell}$
\begin{equation}
\langle u, u - P_n u \rangle = \sum_{i=1}^{\ell} c_i \lambda_i^{-1} a(u_i - P_n u_i, u - P_n u), \notag
\end{equation}
and so,
\begin{eqnarray}
2 \big| Re \langle u, u - P_n u \rangle \big|&=& 2 \big| \sum_{i=1}^{\ell} \, Re  \, c_i \lambda_i^{-1} a(u_i - P_n u_i, u - P_n u) \big| \notag \\
& \le & 2 \|u - P_n u\|_{a} \, \sum_{i=1}^{\ell} \|c_i \lambda_i^{-1} \left(I-P_n\right) u_i \|_{a} \notag
\end{eqnarray}
Now, since $u \in e_{\ell}$,
\begin{eqnarray}
\|u - P_n u\|_{a} &=& \|\left(I-P_n\right) Q_{\ell} u \|_{a} \notag \\
&\le&  \|\left(I-P_n\right) Q_{\ell}\|_{a} \, \| u \|_{a}, \notag
\end{eqnarray}
\begin{equation}
\| u \|_{a} = \sum_{i=1}^{\ell} |c_i|^2 \lambda_i \le \lambda_{\ell} \sum_{i=1}^{\ell} |c_i|^2 = \lambda_{\ell}, \notag
\end{equation}
and so,
\begin{equation}
\|u- P_n u\|_{a} \le \lambda_{\ell}^{1/2} \|\left(I - P_n\right) Q_{\ell}\|_{a}. \notag
\end{equation}
Finally,
\begin{eqnarray}
\sum_{i=1}^{\ell} \|c_i \lambda_i^{-1} \left(I-P_n\right) u_i\|_{a}
&\le& \sum_{i=1}^{\ell} |c_i| \lambda_i^{-1} \|\left(I-P_n\right)Q_{\ell} u_i\|_{a}
\notag \\
&\le& \|\left(I-P_n\right)Q_{\ell} \|_{a} \sum_{i=1}^{\ell} |c_i| \lambda_i^{-1} \|u_i\|_{a}  \notag \\
& = & \| \left(I-P_n\right)Q_{\ell} \|_{a}  \sum_{i=1}^{\ell} |c_i| \lambda_i^{-1/2} \notag
\end{eqnarray}
since $\|u_i\|=\lambda_i^{1/2}$. The Cauchy-Schwarz inequality now yields
\begin{equation}
\max_{u \in e_{\ell}} 2 \big| Re \langle u, u - P_n u\rangle\big| \le 2 \lambda_{\ell}^{1/2} \, \sum_{i=1}^{\ell} \lambda_i^{-1/2} \, \| \left(I-P_n\right)Q_{\ell} \|_{a}^2 \notag
\end{equation}
since $\sum_{i=1}^{\ell} |c_i|^2=1$. Thus we have the eigenvalue estimates

\begin{equation} \notag
0 \le \Lambda_{\ell}^{(n)} -\lambda_{\ell} \le \Lambda_{\ell}^{(n)} \left(\|(I-P_n) Q_{\ell}\|^2 + 2 \lambda_{\ell} \left(\sum_{i=1}^{\ell} \lambda_i^{-1}\right)^{1/2} \|(I-P_n) Q_{\ell}\|_{a}^2
\right)
\end{equation}
or equivalently
\begin{equation} \notag
0 \le \frac{1}{\lambda_{\ell}}- \frac{1}{\Lambda_{\ell}^{(n)}} \le \frac{1}{\lambda_{\ell}} \left(\|(I-P_n) Q_{\ell}\|^2 + 2 \lambda_{\ell} \left(\sum_{i=1}^{\ell} \lambda_i^{-1}\right)^{1/2} \|(I-P_n) Q_{\ell}\|_{a}^2
\right).
\end{equation}
To derive the eigenvector estimates, first note that $\|u_{\ell}\|=\|u_{\ell}^{(n)}\|=1$, so
\begin{eqnarray}
a(u_{\ell}-u_{\ell}^{(n)})&=& a(u_{\ell}) -2 Re \, a(u_{\ell},u_{\ell}^{(n)}) + a (u_{\ell}^{(n)}) \notag \\
&=&  \lambda_{\ell} - 2 \lambda_{\ell} Re \, \langle u_{\ell}, u_{\ell}^{(n)} \rangle +\Lambda_{\ell}^{(n)}\notag \\
&=& \lambda_{\ell} \left(2 -2 Re \, \langle u_{\ell}, u_{\ell}^{(n)} \rangle \right) + \Lambda_{\ell}^{(n)}- \lambda_{\ell}\notag \\
&=&   \lambda_{\ell} \left(\|u_{\ell}\|^2-2 Re \, \langle u_{\ell}, u_{\ell}^{(n)} \rangle + \|u_{\ell}^{(n)}\|^2
\right) + \Lambda_{\ell}^{(n)}- \lambda_{\ell} \notag \\
&=&  \lambda_{\ell} \|u_{\ell}- u_{\ell}^{(n)}\|^2+ \Lambda_{\ell}^{(n)}- \lambda_{\ell}.\notag
\end{eqnarray}
So to estimate the error in energy it remains to obtain an estimate for \newline $ \|u_{\ell}- u_{\ell}^{(n)}\|$. To do this we will need the simple identity
\begin{eqnarray}
\left(\Lambda_{j}^{(n)}- \lambda_{\ell}\right) \langle P_n u_{\ell}, u_{\ell}^{(n)}\rangle
&=& a(P_n u_{\ell},u_{j}^{(n)})- \lambda_{\ell} \langle P_n u_{\ell}, u_{j}^{(n)}\rangle \notag \\
&=& \lambda_{\ell} \langle  u_{\ell}- P_n u_{\ell}, u_{\ell}^{(n)}\rangle,
\label{a4}
\end{eqnarray}
valid for any normalized eigenvectors $u_{\ell}$ corresponding to $\lambda_{\ell}$, and $u_{\ell}^{(n)}$ corresponding to
$\Lambda_{\ell}^{(n)}$. For use in the sequel, please observe that in \eqref{a4} the eigenvector $u_{\ell}$ can be selected in an $n$-dependent fashion in the $\lambda_{\ell}$ eigenspace.

So let $\lambda_{\ell}=\lambda_{\ell+1}=\ldots = \lambda_{\ell+K}$ have multiplicity $K+1$, and define $\delta>0$ by
\[\delta=dist(\{\lambda_{\ell}\}, \sigma(A)-\{\lambda_{\ell}\}).\]
Choose $0<\kappa<1$. Then by the preceding there exists $N$, depending on $\kappa$, such that for all $n\ge N$ and $j$ such that $\lambda_j\neq \lambda_{\ell}$,
\[|\Lambda_{j}^{(n)}-\lambda_{\ell}|\ge \kappa \delta.\]

Pick an $\mathcal{H}$-orthonormal basis $\{v_{\ell+k}\}_{k=0}^K$ for the $\lambda_{\ell}$ eigenspace, $Q \, \mathcal{H}$, and write $u\in Q \, \mathcal{H}$ as
$u=\sum_{k=0}^K c_k v_{\ell+k}$. Then since $\{u_{j}^{(n)}\}_{j=1}^{\ell}$ is an orthonormal basis for $P_n \mathcal{H}$
\[P_n u = \sum_{j=1}^n \langle P_n u, u_{j}^{(n)}\rangle u_{j}^{(n)}, u\in Q \, \mathcal{H}.\]
Then by \eqref{a4}
\begin{eqnarray}
\|P_n u - \sum_{\lambda_j=\lambda_{\ell}}^n \langle P_n u, u_{j}^{(n)}\rangle u_{j}^{(n)}\|^2 &=&
\sum_{\lambda_j\neq\lambda_{\ell}} |\langle P_n u, u_{j}^{(n)}\rangle|^2 \notag \\
&=& \sum_{\lambda_j\neq\lambda_{\ell}}^n \left( \frac{\lambda_{\ell}}{\Lambda_j^{(n)}-\lambda_{\ell}}\right)^2 \, |\langle u-P_n u, u_{j}^{(n)}\rangle|^2 \notag \\
&\le & \frac{\lambda_{\ell}^2}{\kappa^2 \delta^2} \,
\sum_{\lambda_j\neq\lambda_{\ell}}^n |\langle u-P_n u, u_{j}^{(n)}\rangle|^2 \notag \\
&\le & \frac{\lambda_{\ell}^2}{\kappa^2 \delta^2} \, \|u-P_n u\|^2. \notag
\end{eqnarray}
Herein $\sum_{\lambda_j=\lambda_{\ell}}^n$ denotes the sum over those indices $j$ from 1 to $n$ for which $\lambda_j=\lambda_{\ell}$, and similarly for the symbol $\sum_{\lambda_j\neq\lambda_{\ell}}^n$.
Thus for any normalized $u \in Q \, \mathcal{H}$
\begin{eqnarray}
\|u- \sum_{\lambda_j=\lambda_{\ell}}^n \langle P_n u, u_{j}^{(n)}\rangle u_{j}^{(n)}\|
&\le& \|u - P_n u\| + \|P_n u - \sum_{\lambda_j=\lambda_{\ell}}^n \langle P_n u, u_{j}^{(n)}\rangle u_{j}^{(n)}\| \notag \\
&\le& \left(1+ \frac{\lambda_{\ell}}{\kappa \delta}\right)\, \|u - P_n u\|. \label{a5}
\end{eqnarray}
This shows that $u \in Q \, \mathcal{H}$ can be approximated by a linear combination of Rayleigh-Ritz eigenvectors.

To proceed further we first assume that $\lambda_{\ell}$ is a simple eigenvalue of $A$ with normalized eigenvector $u=u_{\ell}$. What then needs to be estimated is the difference between $\langle P_n u_{\ell},u_{\ell}^{(n)} \rangle$ and 1 as $n\to \infty$. For this purpose note that we are free to orient the unit vectors
$u_{\ell}$ and $u_{\ell}^{(n)}$ so that $\langle P_n u_{\ell},u_{\ell}^{(n)} \rangle$ is real and nonnegative, which minimizes $\|u_{\ell} - \langle P_n u_{\ell},u_{\ell}^{(n)} \rangle u_{\ell}^{(n)} \|^2$. Note that this may make $u_{\ell}$ depend on $n$ by a scalar of magnitude 1. Then, since $\langle P_n u_{\ell},u_{\ell}^{(n)} \rangle\ge 0$
\begin{align}
\|u_{\ell}\|- \|u_{\ell}- \langle P_n u_{\ell}, u_{\ell}^{(n)}\rangle
u_{\ell}^{(n)}\| &\le \|\langle P_n u_{\ell}, u_{\ell}^{(n)}\rangle
u_{\ell}^{(n)}\| \notag \\
&\le \|u_{\ell}\| + \|u_{\ell}- \langle P_n u_{\ell}, u_{\ell}^{(n)}\rangle
u_{\ell}^{(n)}\| \label{a6}
\end{align}
is the same as
\[|\langle P_n u_{\ell}, u_{\ell}^{(n)}\rangle-1| \le \|u_{\ell}- \langle P_n u_{\ell}, u_{\ell}^{(n)}\rangle
u_{\ell}^{(n)}\|,
\]
so by \eqref{a5}
\begin{eqnarray}
\|u_{\ell}-u_{\ell}^{(n)}\|&\le& \|u_{\ell}- \langle P_n u_{\ell}, u_{\ell}^{(n)}\rangle
u_{\ell}^{(n)}\|+ \|\left(\langle P_n u_{\ell}, u_{\ell}^{(n)}\rangle-1\right) u_{\ell}^{(n)} \| \notag \\
&\le& 2  \|u_{\ell}- \langle P_n u_{\ell}, u_{\ell}^{(n)}\rangle
u_{\ell}^{(n)}\| \notag \\
&\le& 2 \left(1+ \frac{\lambda_{\ell}}{\kappa \delta}\right) \, \|u_{\ell} - P_n
u_{\ell}\|. \label{a7}
\end{eqnarray}
Now again let  $\lambda_{\ell}=\lambda_{\ell+1}=\ldots = \lambda_{\ell+K}$ have multiplicity $K+1$ and let $\Gamma_n$ be the $(K+1) \times (K+1)$ matrix given by
\[\Gamma_n=
\left[\langle P_n v_{\ell+k}, u_{\ell+m}^{(n)} \rangle\right]_{k,m=0}^{K}.\]
Further let $V$ and $U^{(n)}$ be the column vectors
\[V=\left[
\begin{array}{c}
v_{\ell}\\
v_{\ell+1}\\
\vdots \\
v_{\ell+K}
\end{array}\right], \quad
U^{(n)}=\left[
\begin{array}{c}
u_{\ell}^{(n)}\\
u_{\ell+1}^{(n)}\\
\vdots \\
u_{\ell+K}^{(n)}
\end{array}\right].
\]
The estimate \eqref{a5} shows that for each $k=0, 1, \ldots, K$,
\[\sum_{m=0}^K \langle P_n v_{\ell+k}, u_{\ell+m}^{(n)}\rangle u_{\ell+m}^{(n)} \to
v_{\ell+k}\]
in $\mathcal{H}$ as $n\to \infty$, i.e.,
\[\Gamma_n U^{(n)} \to V\]
in $\mathcal{H}^{K+1}$ as $n \to \infty$. It follows that for large $n$, $\Gamma_n$ has rank $K+1$, and so is invertible. Inversion of $\Gamma_n$ would yield a
suitable basis for $Q \, \mathcal{H}$ to complete the proof provided that it is
shown that the inverse is bounded with a bound independent of $n$. It appears
that a diagonalization procedure which enables us to mimic the simple eigenvalue
case yields a more direct route to an explicit estimate. So let
$u=\sum_{k=0}^K c_k v_{\ell+k} \in Q \, \mathcal{H}$ with $\|u\|=1$.
Then
\[P_n u = \sum_{k=0}^K c_k P_n v_{\ell+k} =
\sum_{j=1}^n \langle \sum_{k=0}^K c_k P_n v_{\ell+k}, u_j^{(n)}\rangle
u_j^{(n)}.\]
Since $\Gamma_n$ has rank $K+1$, for each fixed $j=\ell, \ell+1, \ldots,
\ell+K$ the system of $K$ equations in $K+1$ unknowns,
\[\sum_{k=0}^K c_k \langle P_n v_{\ell+k}, u_i^{(n)}\rangle=0, \qquad \ell \le i \neq j \le
\ell+K,\]
has a one parameter family of solutions,
\[\left(c_{j, \ell}^{(n)}, c_{j, \ell+1}^{(n)},
\ldots, c_{j, \ell+K}^{(n)}
\right).
\]
For each $j=\ell, \ell+1, \ldots, \ell+K$, fix the solution by requiring that
$\sum_{k=0}^K |c_{j, \ell+k}^{(n)} |^2 =1$ and that the unit vector
\[u_{j,n} = \sum_{k=0}^K c_{j, \ell+k}^{(n)} v_{\ell+k}\]
in $Q \, \mathcal{H}$ satisfies
\[\langle P_n u_{j,n}, u_j^{(n)}\rangle \ge 0.\]
We now replace the arbitrarily chosen basis $\{v_j\}_{j=\ell}^{\ell+K}$ for $Q \, \mathcal{H}$ with the basis $\{u_{j,n}\}_{j=\ell}^{\ell+K}$. In place of $\Gamma_n$ and $V$ we now have
\[\Delta_n=
\left[\langle P_n u_{\ell+k,n}, u_{\ell+m}^{(n)} \rangle\right]_{k,m=0}^{K}.\]
and
\[U_n=\left[
\begin{array}{c}
u_{\ell,n}\\
u_{\ell+1,n}\\
\vdots \\
u_{\ell+K,n}
\end{array}\right],
\]
respectively. Note that by construction
\[\Delta_n U^{(n)}=\left[
\begin{array}{c}
\langle P_n u_{\ell,n}, u_{\ell}^{(n)} \rangle u_{\ell}^{(n)} \\
\langle P_n u_{\ell+1,n}, u_{\ell+1}^{(n)} \rangle u_{\ell+1}^{(n)} \\
\vdots \\
\langle P_n u_{\ell+K,n}, u_{\ell+K}^{(n)} \rangle u_{\ell+K}^{(n)}
\end{array}\right].
\]
This diagonalization yields the estimate
\[\|u_{\ell+k,n}- u_{\ell+k}^{(n)}\|\le 2 \left(1+ \frac{\lambda_{\ell}}{\kappa \delta} \right) \|u_{\ell+1,n} - P_n u_{\ell+k,n}\|, \quad k=0, 1, \ldots, K,
\]
exactly as in the one dimensional case, \eqref{a6}, \eqref{a7}. Hence, for each $j=1, \ldots, \ell$ there exists a normalized eigenvector $u_{j,n}$ of $A$ such that
\[\|u_{j,n}-u_j^{(n)}\| \le 2 \left(1+ \frac{\lambda_{\ell}}{\kappa \delta} \right)
\|(I-P_n) Q \|\]
where $Q$ is the orthogonal projection onto the $\lambda_j$ eigenspace.
\hfill \end{proof}




\begin{thebibliography}{xxx}

\bibitem{AS} M. ABRAMOWITZ AND I. A. STEGUN, eds., \textit{Handbook of
Mathematical Functions}, National Bureau of Standards Applied Mathematics
Series, vol. 55, U.S.\ Government Printing Office, Washington,
D.C., 1964.

\bibitem{AAH} R. D. ADAMS, N. ARONSZAJN, AND M. S. HANNA, \textit{Theory of Bessel potentials. Part III. Potentials on regular manifolds}, Ann.\ Inst.\ Fourier (Grenoble) 19 (1969), 279--338.

\bibitem{AAS2} R. D. ADAMS, N. ARONSZAJN, AND K. T. SMITH,
\textit{Theory of Bessel potentials, II}, Ann.\ Inst.\ Fourier (Grenoble) 17 (1967) fasc. 2, 1--135.


\bibitem{Babuska-Osborn} I. BABUSKA AND J. OSBORN, \textit{Eigenvalue problems}, in Handbook of Numerical Analysis, Vol. II, North-Holland, Amsterdam, 1991, pp. 641--787.

\bibitem{Bazley-Zwahlen} N. BAZLEY AND B ZWAHLEN, \textit{Estimation of the bifurcation coefficient for nonlinear eigenvalue problems},  Z.\ Angew.\ Math.\ Phys.\ 20 (1969), 281--288.

\bibitem{Beattie-ETNA} C. A. BEATTIE, \textit{Harmonic Ritz and Lehmann bounds},
Electronic Transactions on Numerical Analysis (ETNA), 7 (1998),
Special Volume on Large Scale Eigenvalue Problems, pp. 18--39.

\bibitem{Beattie-Greenlee-Ober} C. BEATTIE AND W. M. GREENLEE, \textit{Some remarks concerning Aronszajn's method}, in Numerische Behandlung von Eingenwertaufgaben (ed. J. Albrecht, L. Collatz, W. Velte and W. Wunderlich), vol. 5, International Series of Numerical Mathematics 96, 1991, Birkha\"user Verlag, Basel, pp. 23--39.

\bibitem{Beattie-Greenlee} C. BEATTIE AND W. M. GREENLEE, \textit{Improved convergence rates for intermediate problems.} Math.\ Comp.\ 59 (1992), 77--95.

\bibitem{Beattie-Greenlee-proc} C. BEATTIE AND W. M. GREENLEE, \textit{Convergence theorems for intermediate problems, II}, Proc.\ R.\ Soc.\ Edinb.\ A132 (2002), 1057--1072.


\bibitem{Bernardi-Maday}
C. BERNARDI AND Y. MADAY, \textit{Spectral, spectral element and mortar element methods in theory and numerics of differential equations}, Durham 2000, J. F. Blowey, J. P. Coleman, and A. W. Craigs, eds., Springer Verlag, Berlin, 2001, pp. 1-57.


\bibitem{Birk-Fix} G. BIRKHOFF AND G. FIX, \textit{Accurate eigenvalue computations for elliptic problems,} in 1970 Numerical Solution of Field Problems in Continuum Physics (Proc.\ Sympos.\ Appl.\ Math., Durham, N.C., 1968), SIAM-AMS Proc., Vol. II, pp. 111--151, Amer. Math. Soc., Providence, R.I.

\bibitem{Bramble} J. H. BRAMBLE, \textit{Interpolation between Sobolev spaces in Lipschitz domains with an application to multigrid theory}, Math.\ Comp., 64 (1995), 1359--1365.

\bibitem{Brenner-Scott} S. C. BRENNER AND L. R. SCOTT, \textit{The Mathematical Theory of Finite Element Methods,} Second edition. Texts in Applied Mathematics, 15, Springer-Verlag, New York, 2002.

\bibitem{Boyd} J. P. BOYD, \textit{Chebyshev and Fourier Spectral Methods}, Lecture Notes in Engineering, Springer-Verlag, 1989.

\bibitem{Chatelin} F. CHATELIN, \textit{Spectral Approximations of Linear Operators}, Academic Press, New York, 1983.

\bibitem{Collatz} L. COLLATZ, \textit{Eigenwertprobleme und ihre numerische Behandlung}, Chelsea, New York, NY, 1948.

\bibitem{Dunford-Schwartz} N. DUNFORD AND J. T. SCHWARTZ, \textit{Linear Operators}, Part I., Interscience Publishers, New York, 1958.

\bibitem{Gay-EVF} J. G. GAY, \textit{A lower bound procedure for energy eigenvalues.} Phys. Rev., A135 (1964), A-1220--A-1226.

\bibitem{Greenhill} A. G. GREENHILL, \textit{The applications of elliptic functions.} Dover Publications, Inc., New York 1959.

\bibitem{Greenlee} W. M. GREENLEE, \textit{Rate of convergence in singular perturbations}, Ann.\ Inst.\ Fourier (Grenoble), 18 (1968), 135--191.

\bibitem{Grisvard} P. GRISVARD, \textit{Caract\'erisation de qu\`elques espaces d'interpolation}, Arch.\ Rational Mech.\ Anal., 25 (1967), 40--62

\bibitem{Grisvard2} P. GRISVARD, \textit{Elliptic Problems in Nonsmooth Domains}, Pitman, Boston, 1985.

\bibitem{Hilde} F. B. HILDEBRAND, \textit{Advanced Calculus for Applications}, Second Edition, Prentice-Hall, New Jersey, 1976.

\bibitem{Kato} T. KATO, \textit{Perturbation Theory for Linear Operators}, Grundlehren der math. Wiss. 132, New York: Springer-Verlag, 1966.

\bibitem{Kuttler-Sigillito} J. R. KUTTLER AND V. G. SIGILLITO, \textit{Eigenvalues of the Laplacian in two dimensions}, SIAM Rev.\ 26 (1984), 163--193.

\bibitem{Lions-RPR} J. L. LIONS, \textit{
Espaces interm\'ediaires entre espaces hilbertiens et applications}. Bull.\ Math.\ Soc.\ Sci.\ Math.\ Phys.\ R.\ P.\ Roumaine (N.S.), 2(50) (1958), 419--432.

\bibitem{Lions-Magenes} J. L. LIONS AND E. MAGENES, \textit{Probl\`emes au limites non homog\`enes et applications}, Vol. 1, Dunod, Paris, 1968.

\bibitem{Mercier} B. MERCIER, \textit{An introduction to the numerical analysis of spectral methods}, Lecture Notes in Phys., Springer-Verlag, Berlin, 1989.

\bibitem{Muller} C MULLER, \textit{On the behavior of the solutions of the differential equation $\Delta U=F(x,U)$ in the neighborhood of a point}, Comm.\ Pure Appl.\ Math., 7, (1954), 505--515.

\bibitem{Nirenberg} L. NIRENBERG, \textit{Uniqueness in Cauchy problems for differential equations with constant leading coefficients} Comm.\ Pure Appl.\ Math., 10 (1957), 89--105.

\bibitem{Sattinger} D. H. SATTINGER, \textit{Six lectures on the transition to instability} in
\textit{Nonlinear Problems in the Physical Sciences and Biology: Proceedings of a Battelle Summer Institute, Seattle, WA, July 3--28, 1972}, I. Stakgold, D. D. Joseph and D. H. Sattinger, eds., Lecture Notes in Mathematics, Vol. 322. Springer-Verlag, Berlin, 1973, pp. 261--287.

\bibitem{Stakgold} I. STAKGOLD, \textit{Green's functions and boundary value problems}, Wiley, New York, 1979.


\bibitem{Strang-Fix} G. STRANG and G. J. FIX, \textit{An analysis of the finite element method}, Prentice-Hall, Englewood Cliffs, N.J., 1973.

\bibitem{Taylor} M. TAYLOR, \textit{Pseudo-differential operators}, Lecture Notes in Mathematics, No. 416, Springer-Verlag, Berlin, 1974.

\bibitem{Todor} R. A. TODOR, \textit{Robust eigenvalue computation for smoothing operators.} SIAM J.\ Numer.\ Anal., 44 (2006), 865--878.

\bibitem{Vainberg-Trenogin} M. M. VAINBERG AND V. A. TRENOGIN, \textit{The Ljapunov and Schmidt methods in the theory of non-linear equations and their subsequent development} (Russian), Uspehi Mat.\ Nauk, 17 (1962), 13--75.

\bibitem{Weinberger-SIAM} H. F. WEINBERGER, \textit{Variational
methods for eigenvalue approximation}, SIAM, Providence, RI, 1974.

\bibitem{Weyl} H. WEYL, \textit{Das asymptotische Verteilungsgesetz der Eigenwerte linearer partieller Differentialgleichungen}, Math.\ Ann.\ 71 (1911), 441--479.

\end{thebibliography}
\end{document}